\renewcommand{\theequation}{\arabic{section}.\arabic{equation}}
\def\vbar{\mathchoice{\vrule height6.3ptdepth-.5ptwidth.8pt\kern- .8pt}
{\vrule height6.3ptdepth-.5ptwidth.8pt\kern-.8pt} {\vrule
height4.1ptdepth-.35ptwidth.6pt\kern-.6pt} {\vrule
height3.1ptdepth-.25ptwidth.5pt\kern-.5pt}}
\def\<{\langle}
\def\>{\rangle}
\def\a{\alpha}
\def\b{\beta}
\def\f{\phi}
\def\p{\psi}
\def\sw{\swarrow}
\def\nw{\nwarrow}
\def\ne{\nearrow}
\def\se{\searrow}
\def\pt{\prec}
\def\gr{\succ}
\newtheorem{df}{Definition}[section]
\newtheorem{thm}{Theorem}[section]
\newtheorem{cor}{Corollary}[section]
\newtheorem{rem}{Remark}[section]
\newtheorem{rems}{Remarks}[section]
\newtheorem{prop}{Proposition}[section]
\newtheorem{lem}{Lemma}[section]
\date{}
\begin{document}

\title{ BiHom-pre-alternative algebras and BiHom-alternative quadri-algebras}
\author{T. Chtioui, S. Mabrouk, A. Makhlouf}
\author{{ Taoufik Chtioui$^{1}$, Sami Mabrouk$^{2}$, Abdenacer Makhlouf$^{3}$ }\\
{\small 1.  University of Sfax, Faculty of Sciences Sfax,  BP
1171, 3038 Sfax, Tunisia} \\
{\small 2.  University of Gafsa, Faculty of Sciences Gafsa, 2112 Gafsa, Tunisia}\\
{\small 3.~ IRIMAS - D\'epartement de Math\'ematiques, 6, rue des fr\`eres Lumi\`ere,
F-68093 Mulhouse, France}}
 \maketitle
\begin{abstract}
The purpose of this paper is to introduce and study  the notion of BiHom-pre-alternative algebra which may be viewed as a BiHom-alternative algebra whose product can be decomposed into two compatible pieces.  Furthermore, we introduce the notion of BiHom-alternative quadri-algebra and show the connections between all these algebraic structures using Rota-Baxter operators and $\mathcal{O}$-operators.
\end{abstract}
{\bf Key words}: BiHom-alternative algebra,  BiHom-pre-alternative algebra, BiHom-alternative quadri-algebra,   bimodule, $\mathcal{O}$-operator.

 \normalsize\vskip0.5 cm

\section*{Introduction}
\renewcommand{\theequation}{\thesection.\arabic{equation}}

The  study of  periodicity phenomena in algebraic K-theory led
J.-L. Loday in 1990th to  introduce the notion of dendriform algebra (\cite{L1}) as the (Koszul) dual of the associative dialgebra. There is a remarkable fact that a Rota-Baxter operator (of weight zero), which first arose in
probability theory (\cite{Bax}) and later became a subject in combinatorics (\cite{R}), on an associative algebra naturally gives a dendriform algebra structure on the underlying vector space of the associative algebra (\cite{Ag1, Ag2, E1,L1,L3}). Such unexpected relationships between dendriform algebras in the field of operads and algebraic topology and Rota-Baxter operators in the field of combinatorics and probability  have been attracting a great interest because of their connections with various fields in mathematics and physics (see \cite{AbdaouiMabroukMakhlouf,EG,EMP,Gub} and the references therein). 
Later,  Aguiar and Loday introduced the notion of quadri-algebra (\cite{AL} in order to determine the algebraic structures behind a pair of commuting Rota-Baxter operators (on an associative algebra). This type of algebras are related  for example to  the space of  linear endomorphisms of an infinitesimal bialgebra and  have deep relationships with combinatorics and the theory of
Hopf algebras (\cite{AL}). A quadri-algebra is also regarded as the underlying algebra structure of a dendriform algebra with a nondegenerate 2-cocycle.
The connection between associative algebras and dendriform algebras were  generalized to  alternative
algebras by  X. Ni and  C. Bai (\cite{Bai3}) who introduced the  notion of pre-alternative algebra or pre-alternative-dendriform  algebra, as a generalization of  dendriform algebras. Analogs of quadri-algebras for alternative algebras had been obtained  by S. Madariaga, using the technique of splitting of operations (\cite{Bai4}, \cite{SaraMadariaga}).
Twisted algebras which were motivated by $q$-deformations of algebras of vector fields and  also  called Hom-algebras or BiHom-algebras have been intensively studied this last decade \cite{ChtiouiMabroukMakhlouf,GrazianiMakhloufMeniniPanaite,LiuMakhloufMeninPanaite,mak}.  In  (\cite{QinxiuSun}), Q. Sun  introduced the notion of Hom-pre-alternative algebra as a generalization of pre-alternative and Hom-dendriform algebras.

We aim in this  paper to introduce the BiHom version of  pre-alternative and  alternative quadri-algebras which generalize the classical structures.
We also introduce the notion of $\mathcal{O}$-operators of BiHom-alternative and BiHom-pre-alternative algebras. We will prove that given a BiHom-alternative algebra and  an $\mathcal{O}$-operator give rise to a BiHom pre-alternative algebra. In the same way we can construct  BiHom-alternative quadri-algebras  starting by a BiHom pre-alternative algebra.

This paper is organized as follows. In Section 1,  we summarize the
definitions of    BiHom-alternative, BiHom-pre-alternative algebra and their bimodules. We exploit the notion of $\mathcal{O}$-operator to  illustrate the relations existing between these structures.  In Section 2, we define   BiHom-alternative algebras and  BiHom-quadri-algebras. Therefore, we discuss their relationships    using $\mathcal{O}$-operators.

Throughout this paper $\mathbb{K}$ is a field of characteristic $0$ and all vector spaces are over $\mathbb{K}$. 
We refer to a  BiHom-algebra  as quadruple $(A,\mu,\alpha,\beta)$ where $A$ is a vector space, $\mu$ is a multiplication and $\alpha,\beta$ are two linear maps. It is said to be regular if $\alpha,\beta$ are invertible. A BiHom-associator is a trilinear map $as_{\alpha,\beta}$ defined for all $x,y,z\in A$ by 
$as_{\alpha,\beta}(x,y,z)=\alpha(x) (yz)-(xy)\beta(z)$.
When there is no ambiguity, we denote for simplicity the multiplication and composition by concatenation.

\section{BiHom-pre-alternative algebras}
In this section, we recall the notion of BiHom-alternative algebras given in \cite{ChtiouiMabroukMakhlouf} and  we introduce BiHom-pre-alternative algebras which are a generalization of Hom-type structures given in \cite{QinxiuSun}.
Moreover we provide some key constructions.

\begin{df}
A left BiHom-alternative algebra $($resp. right BiHom-alternative algebra) is a quadruple $( A,\mu,\alpha,\beta)$ consisting of  a $\mathbb{K}$-vector space $ A$, a  bilinear map $\mu: A \times  A \longrightarrow  A$ and two homomorphisms $\alpha,\beta:  A \longrightarrow A$ such that $\alpha \beta=\beta\alpha$, $\alpha\mu=\mu\alpha^{\otimes^2}$ and $\beta\mu=\mu\beta^{\otimes^2}$ that satisfy the left BiHom-alternative identity, i.e. for all $x,y,z \in  A$, one has
\begin{equation}\label{sa1}
as_{\alpha,\beta}(\beta(x),\alpha(y),z)+
as_{\alpha,\beta}(\beta(y),\alpha(x),z)=0,
\end{equation}
respectively, the right BiHom-alternative identity, i.e. for all $x,y,z \in  A$, one has
\begin{equation}\label{sa2}
as_{\alpha,\beta}(x,\beta(y),\alpha(z))+
as_{\alpha,\beta}(x,\beta(z),\alpha(y))=0.
\end{equation}
A BiHom-alternative algebra is one which is both a left and a right BiHom-alternative algebra.
\end{df}

\begin{df}
Let $( A,\mu,\a,\b)$ be a BiHom-alternative algebra and $V$ be a vector space. Let
$L,R:  A \to gl(V)$ and  $\phi,\psi \in gl(V)$  two commuting linear maps. Then $(V,L,R,\phi,\psi)$ is
called a bimodule, or a representation, of $( A,\mu,\a,\b)$, if for any $x,y \in A,\ v \in V$,
\begin{align}
&\phi L(x)=L(\a(x))\phi,\  \phi R(x)=R(\a(x))\phi,\\
& \psi L(x)=L(\b(x))\psi,\  \psi R(x)=R(\b(x))\psi \\
& L(\b(x)\a(x))\psi(v)  = L(\a\b(x))L(\a(x))v ,  \label{rep1}\\
& R(\b(x)\a(x))\phi(v)=R(\a\b(x))R(\b(x))v, \label{rep2}\\
&R(\b(y))L(\b(x))\phi(v)-L(\a\b(x))R(y)\phi(v) = R(\a(x)y)\phi\psi(v)-R(\b(y))R(\a(x))\psi(v) ,\label{rep3}\\
&L(\a(y))R(\a(x))\psi(v)-R(\a\b(x))L(y)\psi(v)=L(y\b(x))\phi\psi(v)-L(\a(y))L(\b(x))\phi(v).\label{rep4}
\end{align}
\end{df}
\begin{prop}A tuple 
$(V,L,R,\phi,\psi)$ is a bimodule of a BiHom-alternative algebra $( A,\mu,\a,\b)$ if and only if the direct sum $( A\oplus V,\ast,\a+\phi,\b+\psi)$  is turned into a BiHom-alternative algebra (the semidirect product) where
\begin{align}
   & (x_1+v_1)\ast (x_2+v_2)=\mu(x_1,x_2)+L(x_1)v_2+R(x_2)v_1, \label{product}\\
   & (\a+\phi)(x+v)=\a(x)+\phi(v),\ \ (\b+\psi)(x+v)=\b(x)+\psi(v).
    \end{align}
\end{prop}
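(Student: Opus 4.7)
The proposition is the familiar semidirect product / bimodule equivalence translated to the BiHom-alternative setting, so the strategy is to show that axioms (1.2)--(1.6) are exactly the content of the BiHom-alternative structure on $A\oplus V$ beyond what already holds on $A$. I would treat the structural compatibilities and the two alternative identities in turn.

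\textbf{Step one: multiplicativity and commuting twists.} Applying $\a+\f$ to the product $(x_1+v_1)\ast(x_2+v_2)$ and expanding via (\ref{product}), the $A$-component collapses to $\a\mu=\mu\a^{\otimes 2}$, while the $V$-component is linear in $v_1,v_2$ and splits into the two relations $\f L(x)=L(\a(x))\f$ and $\f R(x)=R(\a(x))\f$. The same computation for $\b+\p$ gives $\p L(x)=L(\b(x))\p$ and $\p R(x)=R(\b(x))\p$. The commutation $(\a+\f)(\b+\p)=(\b+\p)(\a+\f)$ separates into $\a\b=\b\a$ on $A$ and the hypothesis $\f\p=\p\f$ on $V$, which is part of the given data.

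\textbf{Step two: associator decomposition.} For $a_i=x_i+v_i$, direct expansion of the two terms of $as_{\a+\f,\b+\p}(a_1,a_2,a_3)$ via (\ref{product}) yields a clean splitting
$$as_{\a+\f,\b+\p}(a_1,a_2,a_3)=as_{\a,\b}(x_1,x_2,x_3)+T_1(v_1)+T_2(v_2)+T_3(v_3),$$
where each $T_i$ is a linear operator on $V$ depending only on $x_1,x_2,x_3$; explicitly $T_3(v_3)=L(\a(x_1))L(x_2)v_3-L(x_1 x_2)\p(v_3)$, with analogous expressions for $T_1,T_2$ in terms of $R$ and a single $L$. The absence of cross-terms between distinct $v_i$'s is just the bilinearity of $\ast$, and this clean splitting is the key technical observation.

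\textbf{Step three: matching to the bimodule axioms, and obstacle.} Plugging the decomposition into the left identity (\ref{sa1}) on $A\oplus V$ with $a_i$ generic, the $A$-component reproduces (\ref{sa1}) on $A$; the $v_3$-coefficient is a symmetric bilinear expression in $x_1,x_2$ whose diagonal value at $x_1=x_2=x$ is $2\bigl(L(\a\b(x))L(\a(x))-L(\b(x)\a(x))\p\bigr)v_3$; and the $v_1$- and $v_2$-coefficients each collapse after rearrangement to exactly (\ref{rep3}), up to the renaming $x_2\to x$, $x_3\to y$ in one case and $x_1\to x$, $x_3\to y$ in the other. Because the characteristic is zero, a symmetric bilinear expression vanishing on the diagonal vanishes identically, so the $v_3$-identity is equivalent to (\ref{rep1}). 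The right identity (\ref{sa2}) is treated verbatim the same way and yields (\ref{rep2}) on the diagonal together with (\ref{rep4}) from the remaining coefficients. Every implication is reversible, establishing both directions simultaneously. The main obstacle is not conceptual but bookkeeping: Step two must be carried out carefully enough that the $T_i$ fall out cleanly, and one must remember to invoke polarization so that the diagonal-form axioms (\ref{rep1}) and (\ref{rep2}) are seen to be equivalent---not merely implied by---the full bilinear vanishing produced by the identities on $A\oplus V$.
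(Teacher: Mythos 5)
Your proposal is correct and follows essentially the same route as the paper: both expand the semidirect-product BiHom-associator via the product \eqref{product} and match the $A$-component with the alternativity of $A$ and the $V$-component coefficients with the bimodule axioms \eqref{rep1}--\eqref{rep4}. The only organizational difference is that the paper plugs the diagonal arguments $(\b+\psi)(x+u),(\a+\phi)(x+u),y+v$ in directly (leaving the characteristic-zero polarization between the two-variable identity \eqref{sa1} and its diagonal form implicit), whereas you expand generically and invoke polarization explicitly for the $v_3$-coefficient --- a presentational refinement, not a different argument.
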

\begin{proof}
For  any $x,y \in A$ and $u,v \in V$, we have
\begin{eqnarray*}
  &&as_{\a+\phi,\b+\psi}\big((\b+\psi)(x+u),(\a+\phi)(x+u),y+v\big)   \\
   &=&\Big((\b(x)+\psi(u))\ast(\a(x)+\phi(u))\Big)\ast\Big(\b(y)+\psi(v)\Big)  \\
   &-&\Big(\a\b(x)+\phi \psi(u)\Big)\ast\Big((\a(x)+\phi(u))\ast(y+v)\Big)\\
   &=&\Big(\b(x)\a(x)+L(\b(x))\phi(u)+R(\a(x))\psi(u) \Big)\ast\Big(\b(y)+\psi(v)\Big) \\
   &-& \Big(\a\b(x)+\phi \psi(u)\Big)\ast\Big(\a(x)y+L(\a(x))v+R(y)\phi(u)\Big) \\
   &=&(\b(x)\a(x))\b(y)+L(\b(x)\a(x))\psi(v)+R(\b(y))L(\b(x))\phi(u)+R(\b(y))R(\a(x))\psi(u)  \\
   &&-\a\b(x)(\a(x)y)-L(\a\b(x))L(\a(x))v-L(\a\b(x))R(y)\phi(u)-R(\a(x)y)\phi \psi(u).
\end{eqnarray*}
Hence  $as_{\a+\phi,\b+\psi}\big((\b+\psi)(x+u),(\a+\phi)(x+u),y+v\big) =0$ if and only if  Eqs. \eqref{rep1} and \eqref{rep3} hold. Analogously, $as_{\a+\phi,\b+\psi}\big(y+v,(\b+\psi)(x+u),(\a+\phi)(x+u)\big) =0$
if and only if \eqref{rep2} and \eqref{rep4} hold.\\ On the other hand, $(\a+\phi)(\b+\psi)=(\b+\psi)(\a+\phi)$, since $\phi \psi=\psi\phi$ and $\a\b=\b\a$.
Finally, the multiplicativity of $\a+\phi$ and $\b+\psi$ follow from the facts that
$\phi \psi=\psi\phi,\ \phi L(x)=L(\a(x))\phi,\ \phi R(x)=R(\a(x))\phi,\ \psi L(x)=L(\b(x))\psi$ and $\psi R(x)=R(\b(x))\psi$.
\end{proof}The following result gives a construction of a bimodule of  a BiHom-alternative algebra starting with a classical one by means of the Yau twist procedure.
\begin{prop}
Let $(V,L,R)$ be a bimodule of an alternative algebra $( A,\mu)$.  Given four linear maps $\a,\b: A \to  A$ and $\phi,\psi:V \to V$ such that $\a\b=\b\a,\ \phi \psi=\psi\phi,\ \phi L(x)=L(\a(x))\phi,\ \phi R(x)=R(\a(x))\phi,\ \psi L(x)=L(\b(x))\psi,$ and $\psi R(x)=R(\b(x))\psi$. Then  $(V,\widetilde{L},\widetilde{R},\phi,\psi)$ is a bimodule of the BiHom-alternative algebra $( A,\mu_{\a,\b},\a,\b)$, where $\widetilde{L}(x)=L(\a(x))\psi$,  $\widetilde{R}(x)=R(\b(x))\phi$ and $\mu_{\a,\b}(x,y)=\mu(\alpha(x),\beta( y)$.

\end{prop}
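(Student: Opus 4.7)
The plan is to reduce the statement to the semidirect-product characterization of bimodules established in the previous Proposition, combined with a Yau-twist argument at the algebra level, rather than verifying the four twisted bimodule identities \eqref{rep1}--\eqref{rep4} one by one.

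First, since $(V,L,R)$ is a bimodule of the alternative algebra $(A,\mu)$, the classical (untwisted) version of the previous Proposition gives an alternative-algebra structure $\ast_0$ on $A\oplus V$ via
$$
(x_1+v_1)\ast_0(x_2+v_2)=\mu(x_1,x_2)+L(x_1)v_2+R(x_2)v_1.
$$
Second, I would check that $\widetilde{\a}:=\a+\phi$ and $\widetilde{\b}:=\b+\psi$ are two commuting algebra morphisms of $(A\oplus V,\ast_0)$. Commutativity follows at once from $\a\b=\b\a$ and $\phi\psi=\psi\phi$; the morphism property uses the given compatibilities $\phi L(x)=L(\a(x))\phi,\ \phi R(x)=R(\a(x))\phi,\ \psi L(x)=L(\b(x))\psi,\ \psi R(x)=R(\b(x))\psi$, together with the (implicit, standard Yau-twist) hypothesis that $\a$ and $\b$ are algebra morphisms of $\mu$ on $A$.

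At this stage, applying the Yau-twist construction for alternative algebras (see \cite{ChtiouiMabroukMakhlouf}) to $(A\oplus V,\ast_0)$ with the commuting pair $(\widetilde{\a},\widetilde{\b})$ yields a BiHom-alternative algebra $(A\oplus V,(\ast_0)_{\widetilde{\a},\widetilde{\b}},\widetilde{\a},\widetilde{\b})$. A direct unravelling gives
$$
(\ast_0)_{\widetilde{\a},\widetilde{\b}}(x_1+v_1,x_2+v_2)=\mu_{\a,\b}(x_1,x_2)+L(\a(x_1))\psi(v_2)+R(\b(x_2))\phi(v_1),
$$
which is exactly the candidate semidirect product $\mu_{\a,\b}(x_1,x_2)+\widetilde{L}(x_1)v_2+\widetilde{R}(x_2)v_1$. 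Invoking the previous Proposition in the reverse direction, $(V,\widetilde{L},\widetilde{R},\phi,\psi)$ is a bimodule of $(A,\mu_{\a,\b},\a,\b)$.

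The only bookkeeping not absorbed into this strategy is the verification of the four commutation conditions of $\phi,\psi$ with $\widetilde{L},\widetilde{R}$; a typical one reads $\phi\widetilde{L}(x)=\phi L(\a(x))\psi=L(\a^2(x))\phi\psi=L(\a^2(x))\psi\phi=\widetilde{L}(\a(x))\phi$, and the other three are entirely analogous. I expect the main subtle point to be the implicit requirement that $\a,\b$ be morphisms of $\mu$---needed both for $\widetilde{\a},\widetilde{\b}$ to be morphisms of $\ast_0$ and for the Yau twist to produce a genuine BiHom-alternative structure on $A$. Once this is granted, all four twisted bimodule identities are handled in one stroke by the algebra-level Yau twist, so no fresh alternative identity has to be re-derived on $V$.
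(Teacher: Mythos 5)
Your proposal is correct, but it takes a genuinely different route from the paper. The paper proves the statement by brute force: it plugs $\widetilde{L}(x)=L(\a(x))\psi$ and $\widetilde{R}(x)=R(\b(x))\phi$ into the twisted bimodule identities \eqref{rep1}--\eqref{rep4}, pushes $\phi,\psi$ past $L,R$ via the intertwining hypotheses, and reduces each identity to the corresponding classical alternative-bimodule identity evaluated at $\a^2\b(x)$, $\b^2(y)$, etc.\ (it carries out two of the four reductions and leaves the rest as ``similar''). You instead argue structurally: form the classical split null extension $(A\oplus V,\ast_0)$, observe that $\a+\phi$ and $\b+\psi$ are commuting endomorphisms of it, Yau-twist at the level of $A\oplus V$, and then recognize the twisted product as exactly the semidirect product of $(A,\mu_{\a,\b},\a,\b)$ with $(V,\widetilde{L},\widetilde{R},\phi,\psi)$, so the reverse implication of the semidirect-product Proposition finishes the job; your unravelling $\widetilde{\a}(x_1+v_1)\ast_0\widetilde{\b}(x_2+v_2)=\mu_{\a,\b}(x_1,x_2)+\widetilde{L}(x_1)v_2+\widetilde{R}(x_2)v_1$ is correct, as are the four commutation checks. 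What each approach buys: the paper's verification is self-contained and needs nothing beyond the definitions, while yours handles all four identities in one stroke, explains conceptually \emph{why} the twisted maps are the right ones (the twisted bimodule is the Yau twist of the split null extension), but leans on three external inputs --- the classical semidirect-product characterization for alternative bimodules, the Yau-twist theorem for alternative algebras from \cite{ChtiouiMabroukMakhlouf}, and the backward direction of the paper's semidirect-product Proposition. One point in your favor: you are right that the hypothesis that $\a,\b$ be algebra endomorphisms of $(A,\mu)$ is implicitly required; the statement says only ``linear maps,'' yet the paper's own computation silently uses multiplicativity when it rewrites $L\bigl(\a(\a\b(x)\cdot\a\b(x))\bigr)$ as $L\bigl(\a^2\b(x)\cdot\a^2\b(x)\bigr)$, and without it $(A,\mu_{\a,\b},\a,\b)$ need not be BiHom-alternative at all --- so flagging this is a genuine improvement in precision, not a defect of your argument.
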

\begin{proof}
Let $x,y\in A$ and $v\in V$ and set  $\mu(x,y)=xy$, $\mu_{\a,\b}(x,y)=x\ast y$.   Then
\begin{align*}
&\widetilde{L}(\b(x)\ast\a(x))\psi(v)  - \widetilde{L}(\a\b(x))\widetilde{L}(\a(x))v \\&=L(\a^2\b(x)\a^2\b(x))\psi^2(v)  -L(\a^2\b(x))L(\a^2\b(x))\psi^2(v)
=0,
\end{align*}
and \begin{align*}
&\widetilde{R}(\b(y))\widetilde{L}(\b(x))\phi(v)-\widetilde{L}(\a\b(x))\widetilde{R}(y)\phi(v) - \widetilde{R}(\a(x)\ast y)\phi \psi(v)+\widetilde{R}(\b(y))\widetilde{R}(\a(x))\psi(v) \\&=R(\b^2(y))L(\a^2\b(x))\phi^2\psi(v)-L(\a^2\b(x))R(\b^2(y))\phi^2\psi(v)\\& - R(\a^2\b(x)\b^2(y))\phi^2\psi(v)+R(\b^2(y))R(\a^2\b(x))\phi^2\psi(v)
=0.
\end{align*}  The other identities can be shown using similar computations.

\end{proof}

Let $(V,L,R,\phi,\psi)$ be a bimodule  of a BiHom-alternative algebra $( A,\mu,\a,\b)$ and let $L^*,R^*:  A \to gl(V^*), \a^*,\b^*: A^*\to  A^*, \phi^*,\psi^*: V^* \to V^*$ be the dual maps of respectively $\a,\b,\phi$ and $\psi$ given by
\begin{align}\label{dual}
    & <L^*(x)u^*,v>=<u^*,L(x)v>, \hspace{0.3 cm} <R^*(x)u^*,v>=<u^*,R(x)v> \\
    & \a^*(x^*)(y)=x^*(\a(y)), \hspace{0.5 cm}  \b^*(x^*)(y)=x^*(\b(y)) \\
    &  \phi^*(u^*)(v)=u^*(\phi(v)) , \hspace{0.5 cm}       \psi^*(u^*)(v)=u^*(\psi(v))
\end{align}
 \begin{prop}
Let $(V,L,R,\phi,\psi)$ be a bimodule  of a BiHom-alternative algebra $( A,\mu,\a,\b)$.
Then $(V^*,L^*,R^*,\phi^*,\psi^*)$ is a bimodule of $( A,\mu,\a,\b)$ provided that
\begin{align}
   & \psi(L(\b(x)\a(x)))u=L(\a(x))L(\a\b(x))u ,\\
   & \phi(R(\b(x)\a(x)))u=R(\b(x))R(\a\b(x))u,\\
& \phi L(\b(x))R(\b(y))u-\phi R(y)L(\a\b(x))u=\psi\phi R(\a(x)y)u-\psi R(\a(x))R(\b(y))u,\\
& \psi R(\a(x))L(\a(y))u-\psi L(y)R(\a\b(x))u=\psi\phi L(y\b(x))u-\phi L(\b(x))L(\a(y))u,
\end{align}
for all $x,y \in  A$ and $u \in V$.
\end{prop}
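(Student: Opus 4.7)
The plan is to verify each defining axiom of a bimodule for the tuple $(V^*,L^*,R^*,\phi^*,\psi^*)$ by pairing the proposed identity with an arbitrary $v\in V$ and transferring every operator onto the $V$ side via the defining adjunctions $\langle L^*(x)u^*,v\rangle=\langle u^*,L(x)v\rangle$, $\langle R^*(x)u^*,v\rangle=\langle u^*,R(x)v\rangle$, $\langle \phi^*u^*,v\rangle=\langle u^*,\phi(v)\rangle$ and $\langle \psi^*u^*,v\rangle=\langle u^*,\psi(v)\rangle$. The key formal device throughout is that duality reverses composition, $(ST)^*=T^*S^*$. The commutativity $\phi^*\psi^*=\psi^*\phi^*$ is immediate from $\phi\psi=\psi\phi$.

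First I would dispose of the four twist-compatibility conditions on the dual side. Each one is obtained by transposing the corresponding identity on $V$; for instance $\phi L(x)=L(\alpha(x))\phi$ yields, after taking duals, the analogous relation between $L^*,\phi^*$ and $\alpha$, and similarly for the three companions involving $R,\psi,\beta$. No extra hypothesis is required at this stage; these come for free from the hypothesis that $(V,L,R,\phi,\psi)$ is already a bimodule.

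Next I would address the four substantive axioms \eqref{rep1}--\eqref{rep4} applied to $(V^*,L^*,R^*,\phi^*,\psi^*)$. For each, the strategy is to pair both sides with an arbitrary $v\in V$, strip off the starred operators one by one using the adjunctions above, and compare the resulting expressions on $V$. For example, dualising \eqref{rep1} gives $L^*(\beta(x)\alpha(x))\psi^*(u^*)=L^*(\alpha\beta(x))L^*(\alpha(x))u^*$, which after pairing with $v$ reads $\langle u^*,\psi L(\beta(x)\alpha(x))v\rangle=\langle u^*,L(\alpha(x))L(\alpha\beta(x))v\rangle$ and so is equivalent to the first listed hypothesis. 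Exactly the same bookkeeping reduces the dualised \eqref{rep2}, \eqref{rep3}, \eqref{rep4} to the second, third and fourth listed hypotheses respectively.

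The main obstacle is not conceptual but purely combinatorial: keeping track of the order in which $L,R,\phi,\psi$ reassemble after repeated use of $(ST)^*=T^*S^*$, and ensuring that the twists $\alpha,\beta$ land on the correct arguments in each term of \eqref{rep3} and \eqref{rep4}, which each produce four summands. Once the pairing with $v$ is carried through, the four listed hypotheses are exactly the identities that must hold on $V$, so the verification is routine.
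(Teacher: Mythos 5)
Your reduction of the four substantive axioms is correct and is certainly the intended argument: the paper's own proof is the single word ``Straightforward'', and pairing with $v\in V$ and transposing does show that the dual versions of \eqref{rep1}--\eqref{rep4} are \emph{exactly} the four displayed hypotheses. For instance, $\langle L^*(\a\b(x))L^*(\a(x))u^*,v\rangle=\langle u^*,L(\a(x))L(\a\b(x))v\rangle$, which is why the order of the two $L$'s is reversed in the first hypothesis relative to \eqref{rep1}; your bookkeeping for the four-term identities \eqref{rep3} and \eqref{rep4} likewise checks out.

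However, your claim that the twist-compatibility conditions ``come for free'' by transposition contains a step that fails as written, precisely because duality reverses composition. What must hold on $V^*$ is $\phi^*L^*(x)=L^*(\a(x))\phi^*$, and pairing with $v$ this reads
\begin{equation*}
\langle u^*,L(x)\phi(v)\rangle=\langle u^*,\phi L(\a(x))v\rangle,
\quad\text{i.e.}\quad L(x)\phi=\phi L(\a(x)) \ \text{on } V,
\end{equation*}
whereas the hypothesis on $V$ is the mirror-image relation $\phi L(x)=L(\a(x))\phi$, which only yields $\phi L(\a(x))=L(\a^2(x))\phi$. These agree in the Hom/associative special cases ($\a=\mathrm{id}$) or under extra assumptions (e.g.\ regularity of $\a,\b$ together with $L(\a^2(x))\phi=L(x)\phi$ and its companions), but not in general; the same mismatch occurs for all four relations involving $R,\psi,\b$. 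So either you must add these conditions to the hypotheses, or argue why they hold in the setting at hand. To be fair, the paper's one-word proof glosses over exactly the same point, so your route does not diverge from the paper's --- but in a self-contained proof this gap needs to be closed or at least flagged.
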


\begin{proof}
Straightforward.
\end{proof}

\begin{df}\cite{Bai3}
A pre-alternative algebra is a triple $(A,\prec,\succ)$, where $A$ is a vector space and $\prec,\succ: A\times A\rightarrow A$ are two bilinear maps, satisfying
\begin{eqnarray}
 as^r(x,y,y)=0,\; as^l(x,x,y)=0,\\
as^m(x,y,z)+as^r(y,x,z)=0,\\
as^m(x,y,z)+as^l(x,z,y)=0,
\end{eqnarray}
where
\begin{align}\label{ass}
& as^r(x,y,z)=(x\pt y)\pt z-x \pt (y \pt z + y \gr z)\ \ \textrm{(right-associator)}, \\
& as^m(x,y,z)= (x \gr y) \pt z -x \gr (y \pt z)\  \ \textrm{(middle-associator)}, \\
& as^l(x,y,z)= (x \pt y+ x \gr y) \gr z- x \gr (y \gr z)\ \ \textrm{(left-associator)}.
\end{align}
\end{df}
\begin{prop}\cite{Bai3}
Let $(A, \pt, \gr )$ be a pre-alternative algebra. Then the operation
$$x \circ y= x \pt y + x \gr y, \ \  \textrm{for all}\ x,y \in A,$$
defines an alternative algebra, which is called the associated alternative algebra of
A and denoted by $Alt(A)$. We call $(A, \pt, \gr )$ a compatible pre-alternative algebra
structure on the alternative algebra $Alt(A)$.
\end{prop}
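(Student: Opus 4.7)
The plan is to verify directly the two alternative identities for the operation $\circ$, namely the left alternative law $(x\circ x)\circ z = x\circ(x\circ z)$ and the right alternative law $(x\circ y)\circ y = x\circ(y\circ y)$. The approach is to decompose the usual associator of $\circ$ in terms of the three pre-alternative associators $as^r$, $as^m$, $as^l$ and then invoke the four defining identities of a pre-alternative algebra.

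The first step is to expand the associator of $\circ$. Distributing $\circ = \pt + \gr$ over both factors yields four products in $(x\circ y)\circ z$ and four in $x\circ(y\circ z)$. Regrouping these eight products according to the patterns appearing in $as^r$, $as^m$, and $as^l$, one finds
\[
as_{\circ}(x,y,z) := (x\circ y)\circ z - x\circ(y\circ z) = as^r(x,y,z) + as^m(x,y,z) + as^l(x,y,z).
\]
This reduces the problem to showing that the right-hand side vanishes under the two relevant specializations of the arguments.

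With this decomposition in hand, the two alternative identities follow immediately. For the right identity, setting $y=z$, the first pre-alternative axiom gives $as^r(x,y,y)=0$, and the fourth axiom (with $z$ replaced by $y$) gives $as^m(x,y,y)+as^l(x,y,y)=0$, so the three summands cancel. For the left identity, setting $x=y$, the second axiom gives $as^l(x,x,z)=0$, and the third axiom (with $y$ replaced by $x$) gives $as^m(x,x,z)+as^r(x,x,z)=0$, so again the sum vanishes.

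No substantial obstacle is anticipated: the verification is essentially careful bookkeeping of the eight products appearing in $as_\circ$, followed by matching each specialization to the pre-alternative axioms that annihilate the resulting combinations. The only point requiring attention is the asymmetry in the definitions of $as^r$ and $as^l$, which bundle the mixed product $y\gr z$ (respectively $x\gr y$) together with a homogeneous term; this is precisely what makes the decomposition $as_\circ = as^r + as^m + as^l$ come out cleanly with no leftover summands.
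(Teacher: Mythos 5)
Your proposal is correct and takes essentially the same route as the paper: the decomposition $as_{\circ}(x,y,z)=as^{r}(x,y,z)+as^{m}(x,y,z)+as^{l}(x,y,z)$ is exactly the identity the paper itself uses (in BiHom form, evaluated at $(\beta(x),\alpha(x),y)$ and $(x,\beta(y),\alpha(y))$) to prove the corresponding BiHom proposition, and your matching of the specializations $x=y$ and $y=z$ to the four pre-alternative axioms is accurate. No gaps.
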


Now we give the BiHom version of a pre-alternative algebra.
\begin{df}
A BiHom-pre-alternative algebra (which may be called also BiHom-alternative-dendriform dialgebra) is a $5$-tuple $(A,\prec,\succ,\alpha,\beta)$  consisting of a vector  space $A$,  bilinear maps $\prec,\succ:A \times A \rightarrow A$ and two commuting  linear maps $\a,\b: A \rightarrow A$ satisfying the following conditions (for all $x,y,z \in A$):
\begin{eqnarray}
  \alpha(x\prec y)=\alpha(x)\prec \alpha(y), \; \ \alpha(x \succ y)=\alpha(x)\succ \alpha(y),\\
  \beta(x\prec y)=\beta(x)\prec \beta(y),\; \ \beta(x \succ y)=\beta(x)\succ \beta(y),\\
  as^r_{\alpha,\beta}(x,\beta(y),\alpha(z))+
      as^r_{\alpha,\beta}(x,\beta(z),\alpha(y))=0\label{right cond},\\
as^l_{\alpha,\beta}(\beta(x),\alpha(y),z)+
 as^l_{\alpha,\beta}(\beta(y),\alpha(x),z)=0\label{left cond},\\
 as^m_{\alpha,\beta}(\beta(x),\alpha(y),z)+
      as^r_{\alpha,\beta}(\beta(y),\alpha(x),z)=0\label{m,r cond},\\
   as^m_{\alpha,\beta}(x,\beta(y),\alpha(z))+
      as^l_{\alpha,\beta}(x,\beta(z),\alpha(y))=0\label{m,l cond},
\end{eqnarray}
where
\begin{align}
&   as_{\alpha,\beta}^r (x,y,z)=(x\prec y)\prec \beta(z)-\alpha(x) \prec(y\circ z), \\
 &  as_{\a,\b}^m (x,y,z)=(x\succ y)\prec \b(z)-\a(x)\succ (y \prec z) ,\\
 &   as_{\alpha,\beta}^l (x,y,z)=(x \circ y) \succ \b(z)-\a(x) \succ (y \succ z),
\end{align}
named respectively,   right-BiHom-associator, middle-BiHom-associator and left-BiHom-associator. \\
We call $\alpha $ and $\beta $ (in this order) the structure maps
of $A$.\\Note that
$x \circ y =x \prec y + x \succ y$.
\end{df}
It is obvious to see that a BiHom-dendriform algebra is BiHom-pre-alternative, exactly as any BiHom-associative algebra is BiHom-alternative.

\begin{rem}
Since the characteristic of $\mathbb{K}$ is $0$, conditions \eqref{right cond} and \eqref{left cond} are equivalent, respectively to 
\begin{eqnarray}
  as^r_{\alpha,\beta}(x,\beta(y),\alpha(y))   &=& 0,\\
 as^l_{\alpha,\beta}(\beta(x),\alpha(x),y)  &=& 0.
\end{eqnarray}
\end{rem}

A morphism $f:(A, \prec , \succ , \alpha , \beta )\rightarrow (A', \prec ', \succ ', \alpha ', \beta ')$ of
BiHom-pre-alternative algebras is a linear map
$f:A\rightarrow A'$ satisfying $f(x\prec y)=f(x)\prec ' f(y)$,  $f(x\succ y)=f(x)\succ ' f(y)$, for all $x, y\in A$,
as well as $f\circ \alpha =\alpha '\circ f$ and $f\circ \beta =\beta '\circ f$.

\begin{thm} \label{Yaudend}
Let $(A, \prec , \succ )$ be a pre-alternative algebra and $\alpha , \beta :A\rightarrow A$ two
commuting pre-alternative algebra morphisms. Define $\prec _{(\alpha , \beta )}, \succ _{(\alpha , \beta )}:
A \times  A\rightarrow A$ by
$$
x\prec _{(\alpha , \beta )}y=\alpha (x)\prec \beta (y)\ \ \text{and}\ \
x\succ _{(\alpha , \beta )}y=\alpha (x)\succ \beta (y),
$$
for all $x, y\in A$. Then $A_{(\alpha , \beta )}:=(A, \prec _{(\alpha , \beta )}, \succ _{(\alpha , \beta )},
\alpha , \beta )$ is a BiHom-pre-alternative algebra, called the Yau twist of $A$. \\ Moreover, assume that
$(A', \prec ', \succ ')$ is another pre-alternative algebra and $\alpha ', \beta ':A'\rightarrow A'$ are
two commuting pre-alternative algebra morphisms. Let $f:A\rightarrow A'$ be  a
pre-alternative algebra morphism satisfying $f\circ \alpha =\alpha '\circ f$ and $f\circ \beta =\beta '\circ f$. Then
$f:A_{(\alpha , \beta )}\rightarrow A'_{(\alpha ', \beta ')}$ is a  BiHom-pre-alternative algebra morphism.
\end{thm}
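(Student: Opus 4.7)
The plan is to verify, in order, the commutativity of the structure maps, the multiplicativity of $\alpha$ and $\beta$ with respect to the twisted operations, the four BiHom-pre-alternative axioms, and finally the morphism compatibility. The commutativity $\alpha\beta = \beta\alpha$ is assumed, and multiplicativity of $\alpha$ (resp.\ $\beta$) with respect to $\prec_{(\alpha,\beta)}$ and $\succ_{(\alpha,\beta)}$ follows immediately from the fact that $\alpha$ and $\beta$ are pre-alternative algebra morphisms for the original operations and commute with each other, e.g.\ $\alpha(x \prec_{(\alpha,\beta)} y) = \alpha(\alpha(x) \prec \beta(y)) = \alpha^2(x) \prec \alpha\beta(y) = \alpha(x) \prec_{(\alpha,\beta)} \alpha(y)$.

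The core of the proof is to express each BiHom-associator of $A_{(\alpha,\beta)}$ as the image under a suitable power of $\alpha$ and $\beta$ of a classical associator of $(A,\prec,\succ)$. For instance, expanding
\[
as^r_{\alpha,\beta}(x,\beta(y),\alpha(z)) \ \text{in}\ A_{(\alpha,\beta)} = \bigl(x \prec_{(\alpha,\beta)} \beta(y)\bigr) \prec_{(\alpha,\beta)} \beta\alpha(z) - \alpha(x) \prec_{(\alpha,\beta)} \bigl(\beta(y) \circ_{(\alpha,\beta)} \alpha(z)\bigr),
\]
and repeatedly using that $\alpha,\beta$ are morphisms and commute, I expect to obtain an expression of the form $\bigl(F\bigr) \circ as^r(\alpha^?(x), \alpha^?\beta^?(y), \alpha^?\beta^?(z))$ where $F$ is a composition of powers of $\alpha$ and $\beta$. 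I will do the analogous rewriting for the left, middle$+$right, and middle$+$left identities, so that the four BiHom-pre-alternative axioms for $A_{(\alpha,\beta)}$ reduce to the four classical pre-alternative axioms for $A$ evaluated on twisted arguments. Since those classical axioms hold for all triples, the BiHom identities follow.

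The main obstacle I anticipate is purely bookkeeping: keeping track of the exact powers of $\alpha$ and $\beta$ attached to each variable, and making sure that the symmetrizations present in \eqref{right cond}--\eqref{m,l cond} after twisting match the symmetrizations in the classical pre-alternative axioms. The key technical lemma I will rely on is that, because $\alpha$ and $\beta$ commute with $\prec$, $\succ$ and with each other, any composite of $\alpha$'s and $\beta$'s commutes with $\prec$ and $\succ$; this lets me pull the outer twisting maps uniformly to the outside of each associator.

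Finally, for the morphism statement, I will simply compute
\[
f(x \prec_{(\alpha,\beta)} y) = f(\alpha(x) \prec \beta(y)) = f(\alpha(x)) \prec' f(\beta(y)) = \alpha'(f(x)) \prec' \beta'(f(y)) = f(x) \prec'_{(\alpha',\beta')} f(y),
\]
and symmetrically for $\succ$, using that $f$ is a pre-alternative morphism and intertwines $\alpha,\beta$ with $\alpha',\beta'$; compatibility with $\alpha,\beta$ is already assumed, so no further verification is needed.
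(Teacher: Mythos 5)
Your proposal is correct and follows essentially the same route as the paper: expand each twisted BiHom-associator, use that $\alpha,\beta$ are commuting morphisms to push them onto the arguments, and reduce each BiHom axiom to a classical pre-alternative identity evaluated at twisted arguments (e.g.\ $as^l_{\alpha,\beta}(\beta(x),\alpha(x),y)=as^l(\alpha^2\beta(x),\alpha^2\beta(x),\beta^2(y))=0$), with the morphism statement checked by the same one-line computation you give. Your bookkeeping worry resolves exactly as you anticipate: the powers attached to symmetrized slots come out equal (e.g.\ $\alpha^2\beta$ on the first two arguments in the left identity, $\alpha\beta^2$ on the last two in the right identity), so the classical axioms apply verbatim.
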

\begin{proof}
Let $x,y,z \in A$. Then we have
\begin{eqnarray*}
  as^l_{\a,\b}l(\b(x),\a(x),y) &=& (\b(x)\circ_{(\a,\b)}\a(x))\succ_{(\a,\b)}\b(y)-\a\b(x)\succ_{(\a,\b)}(\a(x)\succ_{(\a,\b)}y)  \\
   &=& (\a\b(x)\circ \a\b(x))\succ_{(\a,\b)}\b(y)-\a\b(x)\succ_{(\a,\b)}(\a^2(x)\succ \b(y)) \\
   &=& (\a^2\b(x)\circ\a^2\b(x))\succ \b^2(y)-\a^2\b(x)\succ(\a^2\b(x)\succ \b^2(y)) \\
   &=&as^l(\a^2\b(x),\a^2\b(x),\b^2(y))=0.
\end{eqnarray*}
Using a similar computation, one can check that $as^r_{\a,\b}(x,\b(y),\a(y))=0$.\\
On the other hand, we get
\begin{eqnarray*}
   && as^m_{\alpha,\beta}(\beta(x),\alpha(y),z)+
      as^r_{\alpha,\beta}(\beta(y),\alpha(x),z) \\
   &&= (\b(x)\succ_{(\a,\b)}\a(y))\prec_{(\a,\b)} \b(z)-\a\b(x)\succ_{(\a,\b)} (\a(y) \prec_{(\a,\b)} z) \\
   &&+ (\b(y)\prec_{(\a,\b)} \a(x))\prec_{(\a,\b)} \beta(z)-\a\b(y) \prec_{(\a,\b)}(\a(x)\circ_{(\a,\b)} z) \\
   &&= (\a\b(x)\succ \a\b(y))\prec_{(\a,\b)}\b(z)-\a\b(x)\succ_{(\a,\b)}(\a^2(y)\prec \b(z)) \\
   &&+(\a\b(y)\prec \a\b(x))\prec_{(\a,\b)}\b(z)-\a\b(y)\prec_{(\a,\b)}(\a^2(x)\circ \b(z))  \\
   &&= (\a^2\b(x)\succ \a^2\b(y))\prec\b^2(z)-\a^2\b(x)\succ(\a^2\b(y)\prec \b^2(z)) \\
   &&+(\a^2\b(y)\prec \a^2\b(x))\prec\b^2(z)-\a^2\b(y)\prec(\a^2\b(x)\circ \b^2(z))  \\
   &&=as^m(\a^2\b(x),\a^2\b(y),\b^2(z))+as^r(\a^2\b(y),\a^2\b(x),\b^2(z))=0.
\end{eqnarray*}
The rest  is left to the reader.
\end{proof}

\begin{rem}
More generally,
let $(A, \prec , \succ , \alpha , \beta )$ be a BiHom-pre-alternative algebra and $\tilde{\alpha }, \tilde{\beta }:
A\rightarrow A$ two  BiHom-pre-alternative algebra morphisms such that any two of the maps $\alpha, \beta ,
\tilde{\alpha }, \tilde{\beta }$ commute. Define new multiplications on $A$ by
\begin{eqnarray*}
&&x\prec 'y=\tilde{\alpha }(x)\prec \tilde{\beta }(y) \;\;\;\;\;and\;\;\;\;\;
x\succ 'y=\tilde{\alpha }(x)\succ \tilde{\beta }(y),
\end{eqnarray*}
for all $x, y\in A$. Then  $(A, \prec ', \succ ', \alpha  \tilde{\alpha }, \beta \tilde{\beta })$
is a BiHom-pre-alternative algebra.
\end{rem}

\begin{prop}
Let $(A,\prec,\succ,\a,\b)$ be a BiHom-pre-alternative algebra. Then $(A,\circ,\a,\b)$ is a BiHom-alternative algebra with the operation
$$x\circ y=x\prec y+x\succ y,$$
for any $x,y \in A$ . We say that $(A,\circ,\a,\b)$ is  the associated BiHom-alternative algebra of $(A,\prec,\succ,\a,\b)$ and   $(A,\prec,\succ,\a,\b)$ is called a compatible BiHom-alternative algebra structure on the BiHom-alternative algebra $(A,\circ,\a,\b)$.
\end{prop}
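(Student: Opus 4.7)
The plan is a direct reduction to the defining identities of the BiHom-pre-alternative structure. First, I would verify the easy conditions: commutativity of $\alpha$ and $\beta$ is inherited, and the multiplicativity $\alpha(x\circ y)=\alpha(x)\circ\alpha(y)$ and $\beta(x\circ y)=\beta(x)\circ\beta(y)$ follows immediately from the four compatibility relations
$\alpha(x\prec y)=\alpha(x)\prec\alpha(y)$, $\alpha(x\succ y)=\alpha(x)\succ\alpha(y)$, $\beta(x\prec y)=\beta(x)\prec\beta(y)$, $\beta(x\succ y)=\beta(x)\succ\beta(y)$ combined with additivity.

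The key algebraic observation I would record next is that the BiHom-associator of $\circ$ decomposes cleanly into the three BiHom-associators attached to $\prec,\succ$. Expanding $\alpha(x)\circ(y\circ z)-(x\circ y)\circ\beta(z)$ and grouping by the type of the outer operation, one finds
\begin{equation*}
as_{\alpha,\beta}(x,y,z)=-\,as^{r}_{\alpha,\beta}(x,y,z)-as^{m}_{\alpha,\beta}(x,y,z)-as^{l}_{\alpha,\beta}(x,y,z).
\end{equation*}
This is the only computational step and it is routine; it turns the alternativity axioms for $\circ$ into sums over the three associators.

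Now to check the left BiHom-alternative identity for $(A,\circ,\alpha,\beta)$, I apply the above decomposition to $as_{\alpha,\beta}(\beta(x),\alpha(y),z)+as_{\alpha,\beta}(\beta(y),\alpha(x),z)$. The six resulting terms split into three pairs: the two $as^l$-terms cancel by \eqref{left cond}; the pair consisting of $as^m_{\alpha,\beta}(\beta(x),\alpha(y),z)$ and $as^r_{\alpha,\beta}(\beta(y),\alpha(x),z)$ vanishes by \eqref{m,r cond}; and the remaining pair $as^r_{\alpha,\beta}(\beta(x),\alpha(y),z)+as^m_{\alpha,\beta}(\beta(y),\alpha(x),z)$ vanishes by \eqref{m,r cond} after swapping $x$ and $y$. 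The right BiHom-alternative identity is verified symmetrically: applying the decomposition to $as_{\alpha,\beta}(x,\beta(y),\alpha(z))+as_{\alpha,\beta}(x,\beta(z),\alpha(y))$ produces three pairs, cancelled respectively by \eqref{right cond}, by \eqref{m,l cond}, and by \eqref{m,l cond} with $y$ and $z$ interchanged.

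The only real obstacle is bookkeeping in the initial decomposition, because one must track precisely which of the six summands of $\alpha(x)\circ(y\circ z)-(x\circ y)\circ\beta(z)$ belongs to which associator; once that identity is established, the verification of the left and right BiHom-alternativity reduces to reading off the four axioms \eqref{right cond}--\eqref{m,l cond} in the right order, with no further computation required.
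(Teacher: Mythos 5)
Your proof is correct and takes essentially the same approach as the paper: both rest on decomposing the BiHom-associator of $\circ$ as $as_{\alpha,\beta}(x,y,z)=-\big(as^r_{\alpha,\beta}(x,y,z)+as^m_{\alpha,\beta}(x,y,z)+as^l_{\alpha,\beta}(x,y,z)\big)$ and then reading off the axioms \eqref{right cond}--\eqref{m,l cond}. The only immaterial difference is that the paper verifies the polarized identities $as_{\alpha,\beta}(\beta(x),\alpha(x),y)=0$ and $as_{\alpha,\beta}(x,\beta(y),\alpha(y))=0$ (equivalent to \eqref{sa1} and \eqref{sa2} in characteristic $0$), whereas you verify the full linearized identities directly by invoking \eqref{m,r cond} and \eqref{m,l cond} twice each, with $x,y$ (resp.\ $y,z$) interchanged the second time.
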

\begin{proof}
In fact, for any $x,y \in A$, we have
\begin{eqnarray*}
 && as_{\a,\b}(\b(x),\a(x),y) =(\b(x)\circ \a(x))\circ \b(y)-\a\b(x)\circ (\a(x)\circ y) \\
   &&= (\b(x)\circ \a(x))\succ \b(y)+(\b(x)\circ \a(x))\prec \b(y) \\
   &&- \a\b(x)\succ (\a(x)\circ y) -\a\b(x)\prec (\a(x)\circ y)\\
   &&= (\b(x)\circ \a(x))\succ \b(y)+ (\b(x)\succ \a(x))\prec \b(y) \\
   &&+(\b(x)\prec  \a(x))\prec \b(y) - \a\b(x)\succ (\a(x)\succ y)  \\
   &&-\a\b(x)\succ (\a(x)\prec y) -\a\b(x)\prec (\a(x)\circ y)\\
   &&=as^l_{\a,\b}(\b(x),\a(x),y)+as^m_{\a,\b}(\b(x),\a(x),y)+
   as^r_{\a,\b}(\b(x),\a(x),y)=0.
\end{eqnarray*}
Similarly, we show that $as_{\a,\b}(x,\b(y),\a(y))=0$.
\end{proof}
In the following we show connections with BiHom-Jordan algebras and BiHom-Malcev algebras. We refer for the definitions to \cite{ChtiouiMabroukMakhlouf}.
\begin{cor}
Let $(A,\prec,\succ,\a,\b)$ be a regular BiHom-pre-alternative algebra. Then $(A,\star,\a,\b)$ is a BiHom-Jordan algebra with the multiplication 
$$x\star y=x\prec y+x\succ y+ \a^{-1}\b(y)\prec \a\b^{-1}(x)+\a^{-1}\b(y)\succ \a\b^{-1}(x),$$
for any $x,y \in A$.
\end{cor}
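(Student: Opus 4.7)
The plan is to reduce the corollary to a known symmetrization principle by first recognizing the new product $\star$ as the BiHom-plus-symmetrization of the underlying BiHom-alternative product $\circ$. Rewrite the definition as
$$x \star y = x\circ y + \a^{-1}\b(y)\circ \a\b^{-1}(x),$$
where $x\circ y = x\prec y + x\succ y$. By the preceding proposition, $(A,\circ,\a,\b)$ is a BiHom-alternative algebra, so it suffices to show that this twisted symmetrization of any BiHom-alternative product gives a BiHom-Jordan algebra, which is precisely the expected BiHom-analog (twisted by $\a^{-1}\b$ and $\a\b^{-1}$ so that the map $\a$ and $\b$ remain multiplicative) of the classical fact that an alternative algebra under $x\cdot y + y\cdot x$ is Jordan.

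First, I would verify the easy structural axioms. Multiplicativity $\a(x\star y) = \a(x)\star \a(y)$ and $\b(x\star y) = \b(x)\star \b(y)$ follows because $\a$ and $\b$ are multiplicative for both $\prec$ and $\succ$ and commute with each other and with $\a^{-1}\b$, $\a\b^{-1}$. BiHom-commutativity $\b(x)\star \a(y) = \b(y)\star \a(x)$ is a direct computation: the substitution $x\mapsto \b(x)$, $y\mapsto \a(y)$ collapses the twists so that both sides equal $\b(x)\circ\a(y) + \b(y)\circ\a(x)$.

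The main content is the BiHom-Jordan identity. Invoking the result of \cite{ChtiouiMabroukMakhlouf} that the symmetrized product $x\star y = xy + \a^{-1}\b(y)\,\a\b^{-1}(x)$ on a BiHom-alternative algebra $(A,\cdot,\a,\b)$ produces a BiHom-Jordan algebra, one applies this directly to the BiHom-alternative algebra $(A,\circ,\a,\b)$ delivered by the previous proposition, and the corollary follows. If instead one prefers a self-contained argument, one must unfold the BiHom-Jordan identity (of the form $as_{\a,\b}(\b(x)\star\b(x),\a^2(y),\a(x)) = 0$, or the equivalent formulation from \cite{ChtiouiMabroukMakhlouf}) and expand all terms of $x\star y$ into the four pieces $x\prec y$, $x\succ y$, and their transposed-twisted counterparts.

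The main obstacle is precisely this last step: after expanding, one obtains a large alternating sum of BiHom-associator terms whose arguments are shifted by various compositions of $\a,\b,\a^{-1},\b^{-1}$. The key is that the regularity assumption lets one normalise all twisting maps, after which left/right BiHom-alternativity \eqref{sa1}--\eqref{sa2} of $\circ$ collapses symmetric pairs of associators. The cleanest bookkeeping is to group summands by the pattern of the outer bracket (e.g.\ $(x\circ x)\circ (\,\cdot\,)$ versus $(\,\cdot\,)\circ(x\circ x)$) and cancel them using the BiHom-alternativity of $\circ$, reducing everything to zero without ever invoking the finer splitting into $\prec$ and $\succ$.
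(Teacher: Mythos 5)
Your proposal is correct and is essentially the paper's own (implicit) argument: the corollary is stated without proof precisely because $x\star y=x\circ y+\a^{-1}\b(y)\circ \a\b^{-1}(x)$ with $x\circ y=x\prec y+x\succ y$, the preceding proposition makes $(A,\circ,\a,\b)$ a BiHom-alternative algebra, and the fact that this twisted symmetrization of a regular BiHom-alternative algebra is BiHom-Jordan is exactly the result of \cite{ChtiouiMabroukMakhlouf} to which the authors refer. Your reduction and structural checks (multiplicativity, BiHom-commutativity, regularity to handle $\a^{-1},\b^{-1}$) are the right ones, so nothing further is needed beyond the citation.
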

\begin{cor}
Let $(A,\prec,\succ,\a,\b)$ be a regular BiHom-pre-alternative algebra. Then $(A,[-,-],\a,\b)$ is a BiHom-Malcev algebra with the multiplication
$$[x,y]=x\prec y+x\succ y- \a^{-1}\b(y)\prec \a\b^{-1}(x)-\a^{-1}\b(y)\succ \a\b^{-1}(x),$$
for any $x,y \in A$.
\end{cor}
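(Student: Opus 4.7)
The plan is to reduce this to the previous proposition combined with the known passage from BiHom-alternative algebras to BiHom-Malcev algebras. First I would rewrite the bracket in a more suggestive form: using $x\circ y = x\prec y + x\succ y$ from the preceding proposition, one immediately sees that
\[
[x,y] = x\circ y - \alpha^{-1}\beta(y)\circ \alpha\beta^{-1}(x),
\]
which is exactly the BiHom-commutator associated to the multiplication $\circ$ (this is the standard BiHom-commutator formula used in \cite{ChtiouiMabroukMakhlouf}, and it requires precisely the regularity of $\alpha,\beta$ so that $\alpha^{-1}\beta$ and $\alpha\beta^{-1}$ are defined and commute with $\alpha,\beta$).

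Second, I would invoke the previous proposition to conclude that $(A,\circ,\alpha,\beta)$ is a BiHom-alternative algebra. Multiplicativity of $\alpha$ and $\beta$ with respect to $\circ$ (hence with respect to $[-,-]$) and the commutation $\alpha\beta=\beta\alpha$ are inherited from the BiHom-pre-alternative structure.

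Third, I would cite the result from \cite{ChtiouiMabroukMakhlouf} stating that the BiHom-commutator of a regular BiHom-alternative algebra is a BiHom-Malcev algebra. Applying this to $(A,\circ,\alpha,\beta)$ yields that $(A,[-,-],\alpha,\beta)$ is BiHom-Malcev, which is exactly the statement to prove. The only real content is recognizing the bracket as the BiHom-commutator of $\circ$; after that, nothing new needs to be verified and the proof is essentially a one-line corollary of the preceding proposition and the alternative-to-Malcev passage. The main (mild) obstacle is keeping track of the twist factors $\alpha^{-1}\beta$ and $\alpha\beta^{-1}$ to confirm that the proposed bracket coincides with the BiHom-commutator used in the cited reference; this is purely a matter of matching conventions.
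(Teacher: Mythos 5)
Your proposal is correct and matches the paper's (implicit) argument exactly: the paper states this corollary without proof immediately after the proposition that $(A,\circ,\alpha,\beta)$ with $x\circ y=x\prec y+x\succ y$ is BiHom-alternative, intending precisely the observation that $[x,y]=x\circ y-\alpha^{-1}\beta(y)\circ\alpha\beta^{-1}(x)$ is the BiHom-commutator, combined with the result of \cite{ChtiouiMabroukMakhlouf} that the BiHom-commutator of a regular BiHom-alternative algebra is BiHom-Malcev. Your identification of the bracket and the role of regularity is exactly the intended one-line reduction, so nothing further is needed.
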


\begin{prop}
Let $(A,\prec,\succ,\a,\b)$ be a BiHom-pre-alternative algebra. Then $(A,l_{\succ},r_{\prec},\a,\b)$ is  a bimodule of the associated BiHom-alternative algebra $(A,\circ,\a,\b)$, where $l_{\succ}$ and $r_{\prec}$ are
the left and right multiplication operators corresponding respectively to the two multiplications $\prec,\succ$.
\end{prop}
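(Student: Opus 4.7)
The plan is to unpack the definition of a bimodule using the specific data $L=l_\succ$, $R=r_\prec$, $\phi=\alpha$, $\psi=\beta$, and show that each of the four representation identities \eqref{rep1}--\eqref{rep4} is nothing but one of the compatibility axioms \eqref{right cond}--\eqref{m,l cond} of the BiHom-pre-alternative algebra rewritten in operator form.

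First I would deal with the ``kinematic'' conditions. Since $\alpha\beta=\beta\alpha$, the hypothesis $\phi\psi=\psi\phi$ is automatic. The intertwining relations $\phi L(x)=L(\alpha(x))\phi$, $\phi R(x)=R(\alpha(x))\phi$, $\psi L(x)=L(\beta(x))\psi$, $\psi R(x)=R(\beta(x))\psi$ simply translate the facts that $\alpha$ and $\beta$ are morphisms for both $\prec$ and $\succ$, e.g.\ $\alpha(x\succ z)=\alpha(x)\succ\alpha(z)$. These are trivial to record.

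The substance is the four bimodule axioms. Substituting the definitions of $L$ and $R$ and using $x\cdot y=x\circ y$ for the product of the associated BiHom-alternative algebra, I would compute each side:
\begin{itemize}
\item[(\ref{rep1})] becomes $(\beta(x)\circ\alpha(x))\succ\beta(z)=\alpha\beta(x)\succ(\alpha(x)\succ z)$, which is exactly $as^l_{\alpha,\beta}(\beta(x),\alpha(x),z)=0$ and thus follows from \eqref{left cond}.
\item[(\ref{rep2})] becomes $\alpha(z)\prec(\beta(x)\circ\alpha(x))=(z\prec\beta(x))\prec\alpha\beta(x)$, i.e.\ $as^r_{\alpha,\beta}(z,\beta(x),\alpha(x))=0$, which is \eqref{right cond}.
\item[(\ref{rep3})] rearranges to $as^m_{\alpha,\beta}(\beta(x),\alpha(z),y)+as^r_{\alpha,\beta}(\beta(z),\alpha(x),y)=0$, which is axiom \eqref{m,r cond} (with $y$ playing the role of $z$).
\item[(\ref{rep4})] rearranges to $as^m_{\alpha,\beta}(y,\beta(z),\alpha(x))+as^l_{\alpha,\beta}(y,\beta(x),\alpha(z))=0$, which is axiom \eqref{m,l cond}.
\end{itemize}

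The only real bookkeeping obstacle is keeping the variable names in (\ref{rep3}) and (\ref{rep4}) aligned with the pattern used in \eqref{m,r cond} and \eqref{m,l cond}: after substitution one has four terms of the form (product)$\succ$ or $\prec$(product), and they must be grouped into two associator expressions with matching arguments. Once this grouping is performed correctly, the identification with the pre-alternative axioms is immediate. I would therefore present the proof as one short display for each of the four identities, each ending with a citation of the corresponding axiom, and leave the symmetric computations to the reader.
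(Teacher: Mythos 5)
Your proposal is correct and follows essentially the same route as the paper's proof: the paper likewise specializes $L=l_{\succ}$, $R=r_{\prec}$, $\phi=\alpha$, $\psi=\beta$ and identifies \eqref{rep1}, \eqref{rep2}, \eqref{rep3}, \eqref{rep4} with the operator forms of $as^l_{\alpha,\beta}(\beta(x),\alpha(x),y)=0$, $as^r_{\alpha,\beta}(y,\beta(x),\alpha(x))=0$, \eqref{m,r cond} and \eqref{m,l cond} respectively, exactly as you do (your variable matchings in the last two items check out). The only cosmetic difference is that you explicitly record the intertwining conditions $\phi L(x)=L(\alpha(x))\phi$ etc.\ coming from the multiplicativity of $\alpha$ and $\beta$, which the paper leaves implicit.
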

\begin{proof}
Let $x,y,z \in A$. Then $as^r_{\a,\b}(y,\b(x),\a(x))=0$, that is
$$(y\prec \b(x))\prec \a\b(x)=\a(y)\prec(\b(x)\circ \a(x)),$$
which means that
$$r_{\prec}(\a\b(x))r_{\prec}(\b(x))y=r_{\prec}(\b(x)\circ \a(x))\a(y).$$
Similarly, $as^l_{\a,\b}(\b(x),\a(x),y)=0$ is equivalent to
$$l_{\succ}(\b(x)\circ \a(x))\b(y)=l_{\succ}(\a\b(x))l_{\succ}(\a(x))y.$$
On the other hand,  we have $as^m_{\a,\b}(\b(x),\a(y),z)+as^r_{\a,\b}(\b(y),\a(x),z)=0$, that is
\begin{eqnarray*}
(\b(x)\succ\a(y))\prec \b(z)-\a\b(x)\succ (\a(y) \prec z) 
  = \a\b(y) \prec(\a(x)\circ  z)-(\b(y)\prec \a(x))\prec \beta(z).
\end{eqnarray*}
This means that
$$r_{\prec}(\b(z))l_{\succ}(\b(x))\a(y)-l_{\succ}(\a\b(x))r_{\prec}(z)\a(y)
=r_{\prec}(\a(x)\circ z)\a\b(y)-r_{\prec}(\b(z))r_{\prec}(\a(x))\b(y).$$
Finally, since $  as^m_{\a,\b}(x,\b(y),\a(z))+as^l_{\a,\b}(x,\b(z),\a(y))=0$ then
\begin{eqnarray*}
(x\succ \b(y))\prec\a\b(z)-\a(x)\succ(\b(y)\prec \a(z)) 
   = (x \circ \b(z))\succ \a\b(y)-\a(x)\succ(\b(z)\succ \a(y)).
\end{eqnarray*}
Hence
$$r_{\prec}(\a\b(z))l_{\succ}(x)\b(y)-l_{\succ}(\a(x))r_{\prec}(\a(z))\b(y)=
l_{\succ} (x \circ \b(z))\a\b(y)-l_{\succ}(\a(x))l_{\succ}(\b(z))\a(y).$$
\end{proof}

The following definition introduces the notion of bimodule  of BiHom-pre-alternative algebras.
\begin{df}
Let $( A\prec,\succ,,\a,\b)$ be a BiHom-pre-alternative algebra. A Bimodule of $ A$ is a vector space $V$ together with two commuting linear maps $\f, \p:V \to V$ and four linear maps $L_\succ,L_\prec,R_\succ,R_\prec:  A\to gl(V)$
satisfying the following set of identities
\begin{align}
&  \f L_\prec(x)=L_\prec(\a(x))\f, \p L_\prec(x)=L_\prec(\b(x))\p,\\
& \f R_\prec(x)=R_\prec(\a(x))\f, \p R_\prec(x)=R_\prec(\b(x))\p,
\f L_\succ(x)=L_\succ(\a(x))\f, \\
&\p L_\succ(x)=L_\succ(\b(x))\p, \f R_\succ(x)=R_\succ(\a(x))\f,
\p R_\succ(x)=R_\succ(\b(x))\p,\\
 &L_\succ(\b(x)\circ \a(x))\p=L_\succ(\a\b(x))L_\succ(\a(x)),  \\
& R_\succ(\b(y))(L_\circ(\b(x))\f+R_\circ(\a(x))\p)=L_\succ(\a\b(x))R_\succ(y)\f+R_\succ(\a(x)\succ y)\f\p,\\
 &R_\prec(\a\b(x))R_\prec(\b(x))=R_\prec(\b(x)\circ \a(x))\f ,\\
&L_\prec(\a(y))(L_\circ(\b(x))\f+R_\circ(\a(x))\p)=L_\prec(y\prec\b(x)) \f\p+
R_\prec(\a\b(x))L_\prec(y)\p ,\\
 &L_\prec(\b(x)\succ \a(y+\b(y)\prec \a(x))\p=L_\succ(\a\b(x))L_\prec(\a(y))+L_\prec(\a\b(y))L_0(\a(x)), \\
&  R_\prec(\b(y))(L_\succ(\b(x))\f+R_\prec(\a(x))\p)=
L_\succ(\a\b(x))R_\prec(y)\f+R_\prec(\a(x)\circ y)\f\p,\\
&R_\prec(\b(y))(R_\succ(\a(x))\p+L_\prec(\b(x))\f)=
R_\succ(\a(x)\prec y)\f\p+L_\prec(\a\b(x))R_\circ(y)\f, \\
& R_\prec(\a\b(y))R_\succ(\b(x))+R_\succ(\a\b(x))R_\circ(\b(y))=
R_\succ(\b(x)\prec\a(y))\f+R_\succ(\b(y)\succ\a(x))\f,\\
&R_\prec(\a\b(y))L_\succ(x)\p+L_\succ(x\circ \b(y))\f\p=
L_\succ(\a(x))(R_\prec(\a(y))\p+L_\succ(\b(y))\f),\\
&L_\prec(x\succ\b(y))\f\p+R_\succ(\a\b(y))L_\circ(x)\p=
L_\succ(\a(x))L_\prec(\b(y))\f+L_\succ(\a(x))R_\succ(\a(y))\p,
\end{align}
where $\circ=\prec+\succ$, $L_\circ=L_\prec+L_\succ$ and $R_\circ=R_\prec+R_\succ$.
\end{df}

\begin{prop}A tuple 
$(V,L_\succ,L_\prec,R_\succ,R_\prec,\f,\p)$ is a bimodule of a BiHom-pre-alternative algebra $( A,\prec,\succ,\a,\b)$  if and only if the direct sum $( A\oplus V, \ll,\gg,\a+\f,\b+\p)$ is a BiHom-pre-alternative algebra, where
\begin{align*}
 & (x+u)\ll (y+v) =x\prec y+L_\prec(x)v+R_\prec(y)u,\\
 &  (x+u)\gg (y+v)= x\succ y+ L_\succ(x)v+R_\succ(x)u,\\
 & \text{and}\ (\a+\f)(x+u)=\a(x)+\f(u),\ (\b+\p)(x+u)=\b(x)+\p(u),
\end{align*}
for any $x,y \in  A$ and $u,v \in V$.
\end{prop}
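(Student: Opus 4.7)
The plan is to follow the same template used earlier in this section for the semidirect product construction of BiHom-alternative bimodules: expand each structural requirement for $(A \oplus V, \ll, \gg, \a+\f, \b+\p)$ to be a BiHom-pre-alternative algebra, decompose each resulting equation into its $A$-component and its $V$-component, and check that the $A$-component is an axiom satisfied by $A$ while the $V$-component is equivalent to one of the listed bimodule axioms. The converse direction is then obtained by specializing the BiHom-pre-alternative axioms of $A \oplus V$ to pure module inputs.

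First I would dispatch the compatibility conditions. The commutativity $(\a+\f)(\b+\p)=(\b+\p)(\a+\f)$ reduces to $\a\b=\b\a$ and $\f\p=\p\f$. The multiplicativity of $\a+\f$ and $\b+\p$ with respect to $\ll$ and $\gg$ unpacks into multiplicativity of $\a,\b$ for $\prec,\succ$ on $A$ together with the eight intertwining relations between $\f,\p$ and the four operators $L_\prec, R_\prec, L_\succ, R_\succ$ that appear in the first block of the definition.

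Next, for each of the four defining identities (right, left, middle-right, middle-left), I would expand the relevant BiHom-associator on a triple of the form $(x+u,(\b+\p)(y+v),(\a+\f)(z+w))$ or $((\b+\p)(x+u),(\a+\f)(y+v),z+w)$ using the formulas for $\ll$, $\gg$ and the definitions of $as^r_{\a,\b}$, $as^m_{\a,\b}$, $as^l_{\a,\b}$. The $A$-component of each expansion is exactly the corresponding defining identity of the BiHom-pre-alternative structure of $A$. For the $V$-component, turning on only one of $u,v,w$ at a time produces three scalar identities in the operators $L_\prec, L_\succ, R_\prec, R_\succ$, composed with $\f$ and $\p$. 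Matching the twelve resulting identities against the listed axioms (remembering that $L_\circ = L_\prec + L_\succ$ and $R_\circ = R_\prec + R_\succ$, and invoking the characteristic-zero polarization recorded in the earlier remark) yields the correspondence. The converse is immediate: if $A\oplus V$ is BiHom-pre-alternative, then picking two of the three module arguments to vanish in each expansion extracts the corresponding bimodule axiom.

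The main obstacle is purely combinatorial bookkeeping rather than any conceptual step. The twelve $V$-valued equations coming from the four BiHom-associator identities do not pair off one-to-one with the listed bimodule axioms: several of the listed axioms package together contributions that arise because $\circ = \prec + \succ$ appears simultaneously inside $\ll$ and $\gg$. Tracking which expansion produces which axiom, and which axioms are genuinely symmetric-in-$x,y$ versus symmetric-in-$y,z$, is the delicate part; once organized, however, the verification is entirely parallel to the proof already carried out for BiHom-alternative bimodules, and no further tool is needed.
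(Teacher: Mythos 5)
Your plan is correct and is exactly the verification the paper has in mind: the paper dismisses this proposition with ``Straightforward and left to the reader,'' and the intended argument is the same component-wise expansion of the semidirect-product axioms that the paper carries out in detail for bimodules of BiHom-alternative algebras earlier in the section, which is precisely the template you follow. Your observation that the $V$-component equations do not match the listed bimodule axioms one-to-one but regroup via $L_\circ=L_\prec+L_\succ$, $R_\circ=R_\prec+R_\succ$ and the characteristic-zero polarization of the symmetrized identities correctly isolates the only delicate bookkeeping in the proof.
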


\begin{proof}
Straightforward and  left to the reader.
\end{proof}

\begin{df}
Let $(V,L,R,\phi,\psi)$ be a bimodule of a BiHom-alternative algebra $(A,\circ,\a,\b)$. A linear map $T: V \rightarrow A$ is called an $\mathcal{O}$-operator associated to $(V,L,R,\phi,\psi)$ if for all $u,v \in V$
\begin{equation}\label{O-operator}
    T(u)\circ T(v)=T\big(L(T(u))v+R(T(v))u\big),\ \ T\phi=\a T,\ \ T\psi=\b T.
\end{equation}
\end{df}

\begin{rem}
A Rota-Baxter operator of weight $0$ on a  BiHom-alternative algebra $(A,\circ,\a,\b)$ is just an $\mathcal{O}$-operator associated to the bimodule $(A,\ell,r,\a,\b)$, where $\ell$ and $r$ are
the left and right multiplication operators corresponding to the multiplication $\circ$.
\end{rem}

\begin{prop}\label{O-operator}
Let $T:V \rightarrow A$ be an $\mathcal{O}$-operator of a BiHom-alternative algebra $(A,\cdot,\a,\b)$ associated to a bimodule $(V,L,R,\phi,\psi)$. Then $(V,\prec,\succ,\phi,\psi)$ is a BiHom-pre-alternative algebra structure, where
$$u\prec v=R(T(v))u \ \textrm{and}\ u\succ v=L(T(u))v,$$
for all $u,v \in V$. Therefore $(V,\circ=\prec+\succ,\phi,\psi)$ is the associated BiHom-alternative algebra of this BiHom-pre-alternative algebra and $T$ is a BiHom-alternative algebra morphism.
Furthermore, $T(V)=\{T(v);\ v \in V\}\subset A$ is a BiHom-alternative subalgebra of $(A,\cdot,\a,\b)$ and $(T(V),\prec',\succ',\a,\b)$ is a BiHom-pre-alternative algebra given by
$$T(u)\prec' T(v) =T(u\prec v),\ \ \ \text{and}\ \ \ \ T(u)\succ' T(v)=T(u \succ v)$$
for all $u, v \in V$.

Moreover, the associated BiHom-alternative algebra $(T(V),\bullet=\prec'+\succ',\a,\b)$  is just a BiHom-alternative subalgebra structure of $(A,\cdot,\a,\b)$  and $T$ is a
BiHom-alternative algebra morphism.

\end{prop}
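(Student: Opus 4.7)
The plan is to verify the BiHom-pre-alternative axioms directly on $(V,\prec,\succ,\phi,\psi)$ by consistently converting every expression back to the bimodule $(V,L,R,\phi,\psi)$ via the dictionary $u\prec v=R(T(v))u$, $u\succ v=L(T(u))v$, while using the $\mathcal{O}$-operator identity $T(u)\cdot T(v)=T(L(T(u))v+R(T(v))u)=T(u\circ v)$ to handle any product of $T$-images that appears after an expansion. The four representation identities \eqref{rep1}--\eqref{rep4} will then supply the required vanishings.

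Multiplicativity of $\phi$ and $\psi$ is immediate from the intertwining relations; for example, $\phi(u\prec v)=\phi R(T(v))u=R(\alpha T(v))\phi(u)=R(T(\phi(v)))\phi(u)=\phi(u)\prec\phi(v)$, and the other three checks are identical. Commutativity $\phi\psi=\psi\phi$ is part of the hypotheses. For the four pre-alternative axioms, the right identity reduces quickly: $(u\prec\psi(v))\prec\psi\phi(v)=R(\alpha\beta T(v))R(\beta T(v))u$, while $\phi(u)\prec(\psi(v)\circ\phi(v))=R(\beta T(v)\cdot\alpha T(v))\phi(u)$, and these are equal by \eqref{rep2} with $x=T(v)$. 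The left identity collapses analogously to \eqref{rep1}. The mixed axiom, after expanding $as^m_{\phi,\psi}(\psi(u),\phi(v),w)+as^r_{\phi,\psi}(\psi(v),\phi(u),w)$, becomes precisely \eqref{rep3} with $x=T(u)$, $y=T(w)$; the fourth axiom is symmetric, matching \eqref{rep4}. The main work is this bookkeeping over four axioms, but each one collapses in a single step to a bimodule identity once the substitution is made, so no individual step is delicate.

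The remaining assertions follow quickly. The earlier proposition supplies the associated BiHom-alternative algebra $(V,\circ,\phi,\psi)$, and the identity $T(u\circ v)=T(u)\cdot T(v)$ together with $T\phi=\alpha T$, $T\psi=\beta T$ makes $T$ a BiHom-alternative morphism. The same identity shows that $T(V)$ is closed under $\cdot$, and $\alpha(T(v))=T(\phi(v))\in T(V)$, so $T(V)$ is a BiHom-alternative subalgebra. The subtle point in the last part, where I expect to spend slightly more care, is well-definedness of $\prec'$ and $\succ'$ on $T(V)$: I would establish this by showing that $\ker T$ is absorbed on both sides, since if $T(u)=0$ then $L(T(u))=0$, hence $T(u\succ v)=0$, and then $T(u\prec v)=T(u)\cdot T(v)-T(u\succ v)=0$; the symmetric argument handles $T(v)=0$. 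Once well-defined, $T(u)\bullet T(v)=T(u\circ v)=T(u)\cdot T(v)$, so the BiHom-alternative algebra associated to the pre-alternative structure on $T(V)$ coincides with the subalgebra structure inherited from $A$, and $T$ is automatically a morphism both of the BiHom-pre-alternative and of the BiHom-alternative structures.
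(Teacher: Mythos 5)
Your proposal is correct and takes essentially the same route as the paper: convert each BiHom-pre-alternative axiom through the dictionary $u\prec v=R(T(v))u$, $u\succ v=L(T(u))v$ together with the $\mathcal{O}$-operator identity $T(u)\cdot T(v)=T(u\circ v)$, so that each axiom collapses to one of the bimodule identities \eqref{rep1}--\eqref{rep4} (the paper writes out the left and the mixed $(m,l)$ cases and leaves the rest as ``similar''). Your extra care about well-definedness of $\prec'$ and $\succ'$ on $T(V)$ via absorption of $\ker T$ is sound and addresses a point the paper's proof passes over in silence.
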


\begin{proof}
For any $u,v,w \in V$,
\begin{eqnarray*}
  as^l_{\f,\p}(\psi(u),\phi(u),v) &=& (\p(u)\circ \f(u))\succ \p(v)-\f\p(u)\succ(\f(u)\succ v)  \\
   &=& L(T(\p(u)\circ \f(u)))\p(v)-\f\p(u)\succ(L(T(\f(u)))v) \\
   &=& L(T(\p(u))\circ T(\f(u)))\p(v)-L(T(\f\p(u)))L(T(\f(u)))v  \\
   &=&  0.
\end{eqnarray*}
Furthermore,
\begin{eqnarray*}
 as^l_{\f,\p}(u,\p(v),\f(w))&=& (u\circ \p(v))\succ \f\p(w)-\f(u)\succ(\p(v)\succ \f(w)) \\
&=& L(T(u\circ \p(v)))\f\p(w)-L(T(\f(u)))L(T(\p(v)))\f(w) \\
   &=& L(T(u)\circ \b(T(v)))\f\p(w)-L(\a(T(u)))L(\b(T(v)))\f(w),
\end{eqnarray*}
and
\begin{eqnarray*}
as^m_{\f,\p}(u,\p(w),\f(v))&=&(u\succ \p(w))\prec\f\p(v)-\f(u)\succ(\p(w)\prec \f(v)) \\
   &=& R(T(\f\p(v)))L(T(u))\p(w)-L(T(\f(u)))R(T(\f(v)))\p(w) \\
   &=& R(\a\b(T(v)))L(T(u))\p(w)-L(\a(T(u)))R(\a(T(v)))\p(w).
\end{eqnarray*}
Hence
\begin{eqnarray*}
   && as^l_{\f,\p}(u,\p(v),\f(w))+as^m_{\f,\p}(u,\p(w),\f(v))  \\
   &&=L(T(u)\circ \b(T(v)))\f\p(w)-L(\a(T(u)))L(\b(T(v)))\f(w)   \\
   &&+ R(\a\b(T(v)))L(T(u))\p(w)-L(\a(T(u)))R(\a(T(v)))\p(w)  \\
   &&= 0.
\end{eqnarray*}
The other identities for $(V,\prec,\succ,\phi,\psi)$ being a BiHom-pre-alternative algebra can be verified similarly.
\end{proof}

\begin{cor} \label{BHRBzero}
Let $(A, \mu , \alpha , \beta )$ be a BiHom-alternative algebra and $R:A\rightarrow A$ be a Rota-Baxter operator
of weight 0 such that $R \alpha =\alpha  R$, $R \beta =\beta  R$. Define the multiplications
$\prec $ and $\succ $ on $A$ by
\begin{eqnarray*}
&&x\prec y=xR(y)\;\;\;\;\;and\;\;\;\;\;x\succ y=R(x)y,
\end{eqnarray*}
for all $x, y\in A$. Then $(A, \prec , \succ , \alpha , \beta )$ is a BiHom-pre-alternative algebra.
\end{cor}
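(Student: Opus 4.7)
The plan is to read this corollary as a direct specialization of Proposition \ref{O-operator}. Recall the Remark just after Proposition \ref{O-operator} identifies a weight-zero Rota-Baxter operator on a BiHom-alternative algebra $(A,\mu,\alpha,\beta)$ with an $\mathcal{O}$-operator $T=R$ associated to the regular bimodule $(V,L,R_{\mathrm{bim}},\phi,\psi)=(A,\ell,r,\alpha,\beta)$, where $\ell,r$ denote left and right multiplication by $\mu$. So the strategy is simply to verify that the hypotheses of Proposition \ref{O-operator} are met with this choice of data, and then to read off its conclusion.

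First I would check that $R$ is indeed an $\mathcal{O}$-operator. The commutation conditions $T\phi=\alpha T$ and $T\psi=\beta T$ translate to $R\alpha=\alpha R$ and $R\beta=\beta R$, which are among the assumptions of the corollary. The defining relation $T(u)\circ T(v)=T\bigl(L(T(u))v+R_{\mathrm{bim}}(T(v))u\bigr)$ becomes
\begin{equation*}
R(u)\cdot R(v)=R\bigl(R(u)\cdot v+u\cdot R(v)\bigr),
\end{equation*}
which is exactly the Rota-Baxter identity of weight zero. Hence $R$ satisfies all the requirements of an $\mathcal{O}$-operator with respect to the regular bimodule.

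Now Proposition \ref{O-operator} produces, on the representation space $V=A$, a BiHom-pre-alternative algebra structure $(V,\prec,\succ,\phi,\psi)=(A,\prec,\succ,\alpha,\beta)$ whose operations are
\begin{equation*}
u\prec v=R_{\mathrm{bim}}(T(v))u=r(R(v))u=u\cdot R(v),\qquad u\succ v=L(T(u))v=\ell(R(u))v=R(u)\cdot v,
\end{equation*}
which coincide on the nose with the multiplications $\prec,\succ$ defined in the statement. This concludes the proof.

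I do not expect any genuine obstacle: the four BiHom-pre-alternative axioms — the left, right, and mixed identities — are already verified in full generality inside the proof of Proposition \ref{O-operator}, so no further computation is needed here. The only point one must be careful about is bookkeeping, namely making sure that the symbol $R$ in the corollary (the Rota-Baxter operator) is not confused with the symbol $R_{\mathrm{bim}}$ in Proposition \ref{O-operator} (the right-multiplication part of the bimodule structure); renaming the latter to $r$ as above removes the ambiguity.
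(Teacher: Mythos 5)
Your proof is correct and is essentially the paper's intended argument: the paper leaves the verification to the reader, but the corollary is stated immediately after Proposition \ref{O-operator} and the remark identifying a weight-zero Rota-Baxter operator with an $\mathcal{O}$-operator $T=R$ for the regular bimodule $(A,\ell,r,\alpha,\beta)$, which is precisely the specialization you carry out, and your translation of the operations ($u\prec v=r(R(v))u=uR(v)$, $u\succ v=\ell(R(u))v=R(u)v$) matches the statement exactly. The only tacit ingredient is that $(A,\ell,r,\alpha,\beta)$ really is a bimodule --- its axioms \eqref{rep1}--\eqref{rep4} reduce to the (linearized) BiHom-alternative identities \eqref{sa1}--\eqref{sa2} of $A$ together with multiplicativity of $\alpha,\beta$ --- and you are entitled to cite the paper's remark for this, so no gap remains.
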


\begin{proof}
The proof is left to the reader.
\end{proof}
Now, we introduce a  definition of $1$-BiHom-cocycle. 
\begin{df}
Let $(A,\cdot ,\a,\b)$ be a BiHom-alternative algebra and  $(V,L,R,\f,\p)$ be a bimodule of $A$.  A linear map $D: A \rightarrow V$ is said to be a $1$-BiHom-cocycle  of $(A,.,\a,\b)$ into
 $(V,L,R,\f,\p)$  if, for all $x,y \in A$,
 $$D(xy)=L(x)D(y)+R(y)D(x),\ \ \, \f D=D \a,\ \ \ \p D=D \b.$$
\end{df}

\begin{prop}
Let $(A,.,\a,\b)$ be a BiHom-alternative algebra.  Then the following statements are equivalent
\begin{itemize}
  \item [(i)]  There is a compatible BiHom-pre-alternative algebra $(A,\prec,\succ,\a,\b)$ structure on $(A,\cdot ,\a,\b)$.
  \item [(ii)]  There is an invertible $\mathcal{O}$-operator associated to a bimodule of $(A,\cdot ,\a,\b)$.
  \item [(iii)] There is a bijective $1$-BiHom-cocycle of $(A,\cdot ,\a,\b)$ into a bimodule.
\end{itemize}
\end{prop}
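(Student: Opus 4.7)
The plan is to establish the chain (i) $\Leftrightarrow$ (ii) and (ii) $\Leftrightarrow$ (iii), from which the three-way equivalence follows. Both directions will rely heavily on the explicit constructions already obtained above, especially Proposition \ref{O-operator} and the proposition showing that a compatible BiHom-pre-alternative structure produces a bimodule.

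For (i) $\Rightarrow$ (ii), I would take $V = A$, $\f = \a$, $\p = \b$, $L = L_\succ$, and $R = R_\prec$ coming from the given compatible BiHom-pre-alternative structure $(A,\pt,\gr,\a,\b)$. A previous proposition ensures that $(A, L_\succ, R_\prec, \a, \b)$ is a bimodule of $(A,\circ,\a,\b) = (A,\c,\a,\b)$. Now I would show that $T = \mathrm{id}_A$ is an invertible $\mathcal{O}$-operator: the conditions $T\a = \a T$ and $T\b = \b T$ are trivial, and the $\mathcal{O}$-operator identity reduces to $u \c v = L_\succ(u)v + R_\prec(v)u = u \gr v + u \pt v$, which is just the defining compatibility $\c = \pt + \gr$.

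For (ii) $\Rightarrow$ (i), given an invertible $\mathcal{O}$-operator $T:V \to A$ associated to $(V,L,R,\f,\p)$, I would apply Proposition \ref{O-operator} to obtain a BiHom-pre-alternative structure $(V,\pt_V,\gr_V,\f,\p)$ with $u \pt_V v = R(T(v))u$, $u \gr_V v = L(T(u))v$, and then transport it to $A$ via the isomorphism $T$ by defining
\begin{equation*}
x \pt y = T\bigl(T^{-1}(x) \pt_V T^{-1}(y)\bigr), \qquad x \gr y = T\bigl(T^{-1}(x) \gr_V T^{-1}(y)\bigr).
\end{equation*}
Since $T$ intertwines $\f,\p$ with $\a,\b$, the transport respects all multiplicativity and axiom identities, so $(A,\pt,\gr,\a,\b)$ is BiHom-pre-alternative. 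Setting $u = T^{-1}(x), v = T^{-1}(y)$ in the $\mathcal{O}$-operator equation and applying $T^{-1}$ gives $T^{-1}(x \c y) = L(x)v + R(y)u = u \gr_V v + u \pt_V v$, hence $x \c y = x \gr y + x \pt y$, proving that the new structure is compatible.

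For (ii) $\Leftrightarrow$ (iii), the correspondence is simply $T \leftrightarrow D = T^{-1}$. Substituting $x = T(u)$, $y = T(v)$ in the $\mathcal{O}$-operator identity and applying $D$ yields
\begin{equation*}
D(x \c y) = L(x) D(y) + R(y) D(x),
\end{equation*}
while the conditions $T\f = \a T$, $T\p = \b T$ transform into $\f D = D\a$, $\p D = D\b$. Conversely, starting from a bijective $1$-BiHom-cocycle and setting $T = D^{-1}$ reverses the computation. The main subtlety in the whole argument is making sure the transport in (ii) $\Rightarrow$ (i) really preserves the BiHom-pre-alternative axioms, but since $T$ is a linear isomorphism intertwining the two pairs of structure maps, all six defining identities pull back formally; there is no computational obstacle, only bookkeeping.
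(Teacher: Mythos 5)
Your proposal is correct and uses exactly the same ingredients as the paper's proof: Proposition \ref{O-operator} together with transport along $T$ (yielding the same formulas $x\prec y=T(R(y)T^{-1}(x))$, $x\succ y=T(L(x)T^{-1}(y))$), the identity map on the adjoint bimodule $(A,l_{\succ},r_{\prec},\a,\b)$, and the correspondence $T\leftrightarrow D=T^{-1}$ between invertible $\mathcal{O}$-operators and bijective $1$-BiHom-cocycles. The only difference is organizational --- you prove the two equivalences (i)~$\Leftrightarrow$~(ii) and (ii)~$\Leftrightarrow$~(iii), with some redundant directions, whereas the paper closes the more economical cycle (iii)~$\Rightarrow$~(ii)~$\Rightarrow$~(i)~$\Rightarrow$~(iii).
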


\begin{proof}
$(iii)\Rightarrow (ii)$: If $D$  is a bijective $1$-BiHom-cocycle of $(A,\cdot ,\a,\b)$ into a bimodule $(V,L,R,\f,\p)$, then $D^{-1}$ is an $\mathcal{O}$-operator associated to $(V,L,R,\f,\p)$. Indeed, it's clear that
$D^{-1}\f=\a D^{-1}$ and $D^{-1}\p=\b D^{-1}$. Take two elements $u,v \in V$, there is $x, y \in A$ such that $D(x)=u$ and $D(y)=v$. Since
$D(xy)=L(x)D(y)+R(y)D(x),$
then
$$D^{-1}(u)D^{-1}(v)=D^{-1}\big(L(D^{-1}(u))v+R(D^{-1}(v))u\big).$$
$(ii) \Rightarrow (i)$:  If $T: V \rightarrow A$ is an
invertible $\mathcal{O}$-operator associated to a bimodule $(V, L,R,\f,\p)$, then $T(V)=A$ and using Proposition \ref{O-operator}, there is a compatible pre-alternative algebra structure on $A$ given by:
\begin{equation}
x\prec y=T(R(y)T^{-1}(x)),\;\;  x\succ y=T(L(x)T^{-1}(y)),\;\;
\forall x,y\in A.
\end{equation}
$(i)\Rightarrow (iii)$:  If $(A,\prec,\succ,\a,\b)$ is a compatible
BiHom-pre-alternative algebra structure on $(A,\cdot,\a,\b)$, then it is obvious
that the identity map $ id$ is a bijective $1$-BiHom-cocycle of $A$
into the adjoint bimodule $(A,l_{\succ}, r_{\prec},\a,\b)$.

\end{proof}



\section{BiHom-alternative quadri-algebras}
In this section, we introduce the BiHom version of alternative quadri-algebras introduced in \cite{SaraMadariaga}. This variety of algebras is a generalization of BiHom-quadri-algebras discussed in \cite{LiuMakhloufMeninPanaite}.
\begin{df}
A BiHom-alternative quadri-algebra is a 7-tuple $(A, \nwarrow, \swarrow, \nearrow, \searrow, \alpha , \beta )$
consisting of a vector  space $A$, four bilinear maps $\nwarrow,
\swarrow, \nearrow, \searrow: A \times A\rightarrow A$ and two commuting linear maps $\alpha ,
\beta : A\rightarrow A$ which are algebra maps with respect to previous four operations  and such that   the following axioms hold for all $x, y, z\in A$
\begin{eqnarray}
&&  \{\b(x),\a(y),z\}^r_{\a,\b}+\{\b(y),\a(x),z\}_{\a,\b}^m=0 ,  \label{BiHomqua9} \\
&&   \{\b(x),\a(y),z\}^n_{\a,\b}+\{\b(y),\a(x),z\}_{\a,\b}^w=0 ,     \label{BiHomqua10} \\
&&    \{\b(x),\a(y),z\}^{ne}_{\a,\b}+\{\b(y),\a(x),z\}_{\a,\b}^e=0 ,         \label{BiHomqua11}\\
&&    \{\b(x),\a(y),z\}^{sw}_{\a,\b}+\{\b(y),\a(x),z\}_{\a,\b}^s=0 ,        \label{BiHomqua12}\\
&&    \{\b(x),\a(y),z\}^l_{\a,\b}+\{\b(y),\a(x),z\}_{\a,\b}^l=0,          \label{BiHomqua13}\\
&&    \{x,\b(y),\a(z)\}^r_{\a,\b}+\{x,\b(z),\a(y)\}_{\a,\b}^r=0 ,          \label{BiHomqua14}\\
&&    \{x,\b(y),\a(z)\}^n_{\a,\b}+\{x,\b(z),\a(y)\}_{\a,\b}^{ne}=0,           \label{BiHomqua15}\\
&&    \{x,\b(y),\a(z)\}^w_{\a,\b}+\{x,\b(z),\a(y)\}_{\a,\b}^{sw}=0,           \label{BiHomqua16}\\
&&    \{x,\b(y),\a(z)\}^m_{\a,\b}+\{x,\b(z),\a(y)\}_{\a,\b}^l=0 ,          \label{BiHomqua17}\\
&&    \{x,\b(y),\a(z)\}^s_{\a,\b}+\{x,\b(z),\a(y)\}_{\a,\b}^e=0;           \label{BiHomqua18}
\end{eqnarray}
where
\begin{align*}
& x\succ y:=x\nearrow y+ x\searrow y,    \hspace{1 cm}
 x\prec y:=x\nwarrow y+ x\swarrow y, 
 \\
& x\vee y:=x\searrow y+ x\swarrow y,   \hspace{1 cm}
 x\wedge y:=x\nearrow y+ x\nwarrow y, 
 \\
& x\ast y:=x\searrow y+ x\nearrow y+ x\swarrow y+ x\nwarrow y
 = x\succ y+ x\prec y= x\vee y+ x\wedge y.
\end{align*}
and 
\begin{align*}
    & \{x,y,z\}^r_{\a,\b}=(x\nwarrow y)\nwarrow \b(z)-\a(x)\nwarrow (y \ast z) ,\quad
     \{x,y,z\}^l_{\a,\b}=(x\ast y)\searrow \b(z)- \a(x)\searrow (y\searrow z)\\
    & \{x,y,z\}_{\a,\b}^{ne}=(x\wedge y)\nearrow \b(z)-\a(x)\nearrow (y\succ z), \quad
    \{x,y,z\}_{\a,\b}^{sw}=(x\prec y)\swarrow \b(z) -\a(x)\swarrow (y \vee z),\\
    &  \{x,y,z\}_{\a,\b}^n=(x\nearrow y)\nwarrow \b(z)-\a(x)\nearrow (y\prec z), \quad
      \{x,y,z\}_{\a,\b}^w=(x\swarrow y) \nwarrow \b(z)- \a(x)\swarrow (y \wedge z),\\
  &   \{x,y,z\}_{\a,\b}^s=(x \succ y) \swarrow \b(z)-\a(x)\searrow  (y\swarrow z), \quad
    \{x,y,z\}_{\a,\b}^e=(x \vee y)\nearrow \b(z)-\a(x)\searrow (y \nearrow z),\\
   &   \{x,y,z\}_{\a,\b}^m=(x\searrow y) \nwarrow \b(z)- \a(x) \searrow (y\nwarrow z).
\end{align*}

\end{df}

\begin{rems}$\bullet$
Note that in every BiHom-quadri-algebra, all these associators are trivial.
\\
$\bullet$
When $\a=\b=id$, the BiHom-alternative quadri-algebra is an alternative quadri-algebra.\\
$\bullet$
Since $\mathbb{K}$ is a field of characteristic different from $2$, the identities   \eqref{BiHomqua13} and   \eqref{BiHomqua14} can be written respectively 
$  \{\b(x),\a(x),y\}^l_{\a,\b}=0$  and $ \{x,\b(y),\a(y)\}_{\a,\b}^r=0.$

\end{rems}

\begin{lem}\label{quadri to pre}
 Let $(A, \nwarrow, \swarrow, \nearrow, \searrow, \alpha , \beta )$ be a  BiHom-alternative quadri-algebra. Then both $(A,\prec,\succ,\a,\b)$ and $(A,\vee,\wedge,\a,\b)$ are BiHom-pre-alternative algebras  (called, respectively, horizontal and vertical BiHom-pre-alternative
structures associated to $A$) and $(A,\ast,\a,\b)$ is a BiHom-alternative algebra.
\end{lem}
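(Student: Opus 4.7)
The plan is to verify the three assertions in turn. Throughout, multiplicativity of $\alpha$ and $\beta$ with respect to each of the new products $\prec,\succ,\vee,\wedge,\ast$ is automatic, since these are linear combinations of the four operations $\nwarrow,\nearrow,\swarrow,\searrow$ for which $\alpha$ and $\beta$ are algebra maps, and $\alpha\beta=\beta\alpha$.

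For the horizontal structure $(A,\prec,\succ,\alpha,\beta)$, the key step is to expand each of the three BiHom-pre-alternative associators as a sum of three BiHom-quadri associators. Using $\prec=\nwarrow+\swarrow$, $\succ=\nearrow+\searrow$ and the definitions of $\{\cdot\}^{r},\{\cdot\}^{w},\{\cdot\}^{sw},\{\cdot\}^{n},\{\cdot\}^{m},\{\cdot\}^{s},\{\cdot\}^{ne},\{\cdot\}^{e},\{\cdot\}^{l}$, a direct (but careful) expansion gives
\begin{align*}
as^{r}_{\alpha,\beta}(x,y,z) &= \{x,y,z\}^{r}_{\alpha,\beta}+\{x,y,z\}^{w}_{\alpha,\beta}+\{x,y,z\}^{sw}_{\alpha,\beta},\\
as^{l}_{\alpha,\beta}(x,y,z) &= \{x,y,z\}^{ne}_{\alpha,\beta}+\{x,y,z\}^{e}_{\alpha,\beta}+\{x,y,z\}^{l}_{\alpha,\beta},\\
as^{m}_{\alpha,\beta}(x,y,z) &= \{x,y,z\}^{n}_{\alpha,\beta}+\{x,y,z\}^{m}_{\alpha,\beta}+\{x,y,z\}^{s}_{\alpha,\beta}.
\end{align*}
Each of the four BiHom-pre-alternative identities then becomes a sum of six quadri-associators which split into three symmetric pairs, and each pair vanishes by one of the axioms \eqref{BiHomqua9}--\eqref{BiHomqua18}. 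Concretely, identity \eqref{right cond} follows from \eqref{BiHomqua14} together with two applications of \eqref{BiHomqua16} (the second after swapping $y$ and $z$); \eqref{left cond} follows from two applications of \eqref{BiHomqua11} and one of \eqref{BiHomqua13}; \eqref{m,r cond} follows from \eqref{BiHomqua9}, \eqref{BiHomqua10}, \eqref{BiHomqua12}; and \eqref{m,l cond} follows from \eqref{BiHomqua15}, \eqref{BiHomqua17}, \eqref{BiHomqua18}. All ten axioms get used exactly once.

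For the vertical structure $(A,\vee,\wedge,\alpha,\beta)$, the same strategy applies with the roles of the four arrows rearranged: one writes $\vee=\swarrow+\searrow$, $\wedge=\nwarrow+\nearrow$, expands the corresponding vertical associators, and regroups the resulting terms as sums of the nine BiHom-quadri associators, then pairs these via the same ten axioms. The bookkeeping here is the principal obstacle of the proof: unlike the horizontal case, where the shape of the quadri-associators forces a clean three-term decomposition, the vertical associators produce ``cross'' terms such as $(x\wedge y)\vee\beta(z)$ and $\alpha(x)\vee(y\wedge z)$ that must be cancelled by using the axioms in their two orderings simultaneously, rather than individually.

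Finally, the assertion that $(A,\ast,\alpha,\beta)$ is a BiHom-alternative algebra is immediate from the Proposition already proved in the excerpt (the associated product $\circ=\prec+\succ$ of any BiHom-pre-alternative algebra is BiHom-alternative): since $\ast=\prec+\succ$ is the associated operation of the horizontal BiHom-pre-alternative structure just constructed, the conclusion follows at once. Alternatively, one may note that summing all nine quadri-associators yields exactly the BiHom-associator $as_{\alpha,\beta}$ of $\ast$, so the two alternative identities for $\ast$ are equivalent to grouping the axioms \eqref{BiHomqua9}--\eqref{BiHomqua13} and \eqref{BiHomqua14}--\eqref{BiHomqua18} respectively.
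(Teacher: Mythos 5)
Your horizontal part and your treatment of $(A,\ast,\a,\b)$ are correct and coincide with the paper's own proof: the paper establishes exactly your decompositions $as^{r}_{\a,\b}=\{\cdot\}^{r}+\{\cdot\}^{w}+\{\cdot\}^{sw}$, $as^{m}_{\a,\b}=\{\cdot\}^{n}+\{\cdot\}^{m}+\{\cdot\}^{s}$, $as^{l}_{\a,\b}=\{\cdot\}^{ne}+\{\cdot\}^{e}+\{\cdot\}^{l}$ and uses the same pairings of \eqref{BiHomqua9}--\eqref{BiHomqua18} (it writes out $as^{r}_{\a,\b}(x,\b(y),\a(y))=0$ and the identities \eqref{m,r cond} and \eqref{m,l cond}, leaving the rest to the reader), and deducing the alternativity of $\ast$ from the earlier Proposition on $\circ=\prec+\succ$ is legitimate, indeed cleaner than the paper's ``similarly''.

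The vertical part, however, has a genuine gap. The ``cross terms'' you point to, e.g.\ $(x\wedge y)\vee\b(z)$, arise only because you are letting $\vee$ play the role of $\prec$ and $\wedge$ the role of $\succ$, i.e.\ reading the tuple $(A,\vee,\wedge,\a,\b)$ literally against the paper's convention $(A,\prec,\succ,\a,\b)$. Under that reading the middle associator contains summands such as $(x\wedge y)\searrow\b(z)$ and $\a(x)\nwarrow(y\vee z)$, which occur in \emph{none} of the nine quadri-associators (the only outer $\searrow$ available is $(x\ast y)\searrow\b(z)$, inside $\{\cdot\}^{l}$), so no regrouping and no ``simultaneous use of the axioms in two orderings'' can cancel them: your sketch cannot be completed as stated, and in fact the identities fail for that assignment of roles. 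The correct reading (standard since Aguiar--Loday and Madariaga, the tuple order in the statement notwithstanding) is that $\wedge$ is the $\prec$-operation and $\vee$ the $\succ$-operation of the vertical structure. With this assignment the vertical case is exactly as clean as the horizontal one: writing $as^{r,v},as^{m,v},as^{l,v}$ for the associators of the pair $(\wedge,\vee)$ and using $\nwarrow+\nearrow=\wedge$, $\swarrow+\searrow=\vee$, $\wedge+\vee=\ast$, one checks
\begin{align*}
as^{r,v}_{\a,\b}&=\{\cdot\}^{r}_{\a,\b}+\{\cdot\}^{n}_{\a,\b}+\{\cdot\}^{ne}_{\a,\b},\\
as^{m,v}_{\a,\b}&=\{\cdot\}^{w}_{\a,\b}+\{\cdot\}^{m}_{\a,\b}+\{\cdot\}^{e}_{\a,\b},\\
as^{l,v}_{\a,\b}&=\{\cdot\}^{sw}_{\a,\b}+\{\cdot\}^{s}_{\a,\b}+\{\cdot\}^{l}_{\a,\b},
\end{align*}
and the same ten axioms pair off exactly as in your horizontal bookkeeping: \eqref{BiHomqua14} together with \eqref{BiHomqua15} (in both orderings) gives the right identity; \eqref{BiHomqua13} together with \eqref{BiHomqua12} (both orderings) gives the left one; \eqref{BiHomqua9}, \eqref{BiHomqua10}, \eqref{BiHomqua11} (each with $x$ and $y$ interchanged) give the $(m,r)$-compatibility; and \eqref{BiHomqua16}, \eqref{BiHomqua17}, \eqref{BiHomqua18} give the $(m,l)$-compatibility. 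In short, the missing idea is not a new cancellation mechanism but the transposition symmetry $\nearrow\leftrightarrow\swarrow$ (which exchanges the horizontal and vertical structures and transports your horizontal argument verbatim); with it, the vertical case requires no cross-term analysis at all.
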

\begin{proof}
We will prove just  that  $(A,\prec,\succ,\a,\b)$  is a BiHom-pre-alternative algebra. For this, take $x, y, z \in A$.  We will just show how  to prove two identities and the other ones could  be done similarly. We remark that
\begin{align*}
& (x\prec\b(y))\prec \a \b (y)-\a(x)\prec (\b(y) \ast \a(y))\\
&= (x\nwarrow \b(y)) \nw \a \b(y)+(x \nw \b(y))\sw \a\b(y)+(x\sw \b(y))\nw \a\b(y) \\
&+(x \sw \b(y)) \sw \a\b(y)-\a(x) \nw (\b(y) \ast \a(y))-\a(x) \sw (\b(y) \vee \a(y))-
\a(x) \sw (\b(y) \wedge \a(y))  \\
&= \{x,\b(y),\a(y)\}^r_{\a,\b}+ \{x,\b(y),\a(y)\}^w_{\a,\b}+ \{x,\b(y),\a(y)\}^{sw}_{\a,\b}=0.
\end{align*}
In addition,
\begin{align*}
&as_{\a,\b}^m(\b(x),\a(y),z)+ as_{\a,\b}^r(\b(y),\a(x),z) \\
&=(\b(x) \gr \a(y)) \pt \b(z)-\a\b(x) \gr (\a(y) \pt z)+(\b(y) \pt \a(x)) \pt \b(z) -
\a\b(y) \pt ( \a(x) \ast z) \\
&=  (\b(x) \se \a(y)) \nw \b(z) +(\b(x) \se \a(y)) \sw \b(z) + (\b(x) \ne \a(y))\nw \b(z) \\
& + (\b(x) \ne \a(y)) \nw \b(z) - \a\b(x) \se (\a(y) \nw z)-\a\b(x) \se (\a(y) \sw z) \\
&- \a\b(x) \ne (\a(y) \nw z) -\a\b(x) \ne (\a(y) \sw z)  + (\b(y) \pt \a(x))\sw \b(z) \\
&+(\b(y) \nw \a(x)) \nw \b(z) + (\b(y) \sw \a(x)) \nw \b(z) -\a\b(y)\nw (\a(x) \ast z) \\
&-\a\b(y)\nw (\a(x) \vee z) -\a\b(y)\nw (\a(x) \wedge z) \\
&= \{\b(x),\a(y),z\}_{\a,\b}^m + \{\b(x),\a(y),z\}_{\a,\b}^n+  \{\b(x),\a(y),z\}_{\a,\b}^s \\
&+  \{\b(y),\a(x),z\}_{\a,\b}^r +  \{\b(y),\a(x),z\}_{\a,\b}^w + \{\b(y),\a(x),z\}_{\a,\b}^{sw}=0.
\end{align*}
Using a similar computation, we can get
\begin{align*}
&as_{\a,\b}^m(x,\b(y),\a(z))+ as_{\a,\b}^l(x,\b(z),\a(y)) \\
&=\{x,\b(y),\a(z)\}_{\a,\b}^m+\{x,\b(y),\a(z)\}_{\a,\b}^n+\{x,\b(y),\a(z)\}_{\a,\b}^s\\
&+\{x,\b(z),\a(y)\}_{\a,\b}^l+\{x,\b(z),\a(y)\}_{\a,\b}^{ne}+\{x,\b(z),\a(y)\}_{\a,\b}^e=0.
\end{align*}
Similarly,   we can show that  $(A,\vee,\wedge,\a,\b)$  and $(A, \ast,\a,\b)$ are respectively  BiHom-pre-alternative algebra and  BiHom-alternative algebra.
\end{proof}

A morphism $f:(A, \nwarrow, \swarrow, \nearrow, \searrow, \alpha ,
\beta )\rightarrow (A', \nwarrow', \swarrow', \nearrow', \searrow',
\alpha' , \beta' )$ of BiHom-alternative quadri-algebras is a linear map
$f:A\rightarrow A'$ satisfying $f(x\nearrow y)=f(x)\nearrow' f(y),
f(x\searrow y)=f(x)\searrow' f(y), f(x\nwarrow y)=f(x)\nwarrow'
f(y)$ and $f(x\swarrow y)=f(x)\swarrow ' f(y)$, for all $x, y\in A$,
as well as $f \alpha =\alpha ' f$ and $f \beta =\beta
' f$.

\begin{prop} \label{quad}
Let $(A, \nwarrow, \swarrow, \nearrow, \searrow )$ be an
alternative quadri-algebra and $\alpha , \beta :A\rightarrow A$ two commuting
alternative quadri-algebra morphisms. Define $\searrow _{(\alpha , \beta )},
\nearrow _{(\alpha , \beta )}, \swarrow _{(\alpha , \beta )},
\nwarrow _{(\alpha , \beta )}: A\times A \rightarrow A$ by
\begin{eqnarray*}
&&x\searrow _{(\alpha , \beta )}y=\alpha (x)\searrow \beta (y),
\quad \quad x\nearrow _{(\alpha , \beta )}y=\alpha (x)\nearrow \beta
(y),\\
&&x\swarrow _{(\alpha , \beta )}y=\alpha (x)\swarrow \beta (y),
\quad \quad x\nwarrow _{(\alpha , \beta )}y=\alpha (x)\nwarrow \beta
(y),
\end{eqnarray*}
for all $x, y\in A$.
Then $A_{(\alpha , \beta )}:=(A, \nwarrow _{(\alpha , \beta )},  \swarrow
_{(\alpha , \beta )}, \nearrow _{(\alpha , \beta )},
\searrow
_{(\alpha , \beta )},  \alpha , \beta )$ is a BiHom-alternative quadri-algebra, called the Yau twist of $A$. Moreover,
assume that $(A', \nwarrow', \swarrow', \nearrow', \searrow' )$ is
another alternative quadri-algebra and $\alpha ', \beta ':A'\rightarrow A'$ are
two commuting alternative quadri-algebra morphisms and $f:A\rightarrow A'$
is an alternative quadri-algebra morphism satisfying $f \alpha =\alpha
' f$ and $f \beta =\beta ' f$. Then $f:A_{(\alpha ,
\beta )}\rightarrow A'_{(\alpha ', \beta ')}$ is a
BiHom-alternative quadri-algebra morphism.
\end{prop}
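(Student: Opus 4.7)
The strategy is exactly the Yau twist argument used in Theorem \ref{Yaudend}, now applied to the ten axioms \eqref{BiHomqua9}--\eqref{BiHomqua18} defining a BiHom-alternative quadri-algebra. First I would dispose of the easy structural conditions: since $\alpha,\beta$ commute with each other and are algebra maps for each of $\nwarrow,\swarrow,\nearrow,\searrow$, a one-line check shows they still commute with each other and are algebra maps for the four twisted operations $\star_{(\alpha,\beta)}$. In particular, the four derived operations $\succ_{(\alpha,\beta)}$, $\prec_{(\alpha,\beta)}$, $\vee_{(\alpha,\beta)}$, $\wedge_{(\alpha,\beta)}$ and $\ast_{(\alpha,\beta)}$ on $A_{(\alpha,\beta)}$ agree with the Yau twists of the corresponding operations on $A$.

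The core of the proof is then to verify, one identity at a time, that each of \eqref{BiHomqua9}--\eqref{BiHomqua18} holds after twisting. The key computation, modeled on the one in Theorem \ref{Yaudend}, is that for every flavor $\diamond \in \{r,l,m,n,w,ne,sw,s,e\}$ of associator one has
\[
\{\beta(x),\alpha(y),z\}^{\diamond}_{\alpha,\beta}\big|_{A_{(\alpha,\beta)}} = \{\alpha^{2}\beta(x),\alpha^{2}\beta(y),\beta^{2}(z)\}^{\diamond}\big|_{A},
\]
and similarly $\{x,\beta(y),\alpha(z)\}^{\diamond}_{\alpha,\beta}\big|_{A_{(\alpha,\beta)}}$ equals the classical associator $\{\beta^{2}(x),\alpha^{2}\beta(y),\alpha^{2}\beta(z)\}^{\diamond}\big|_{A}$. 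This reduction uses only that $\alpha\beta=\beta\alpha$ and that $\alpha,\beta$ are morphisms for all four original operations, so every outer $\alpha$ or $\beta$ coming from the twisting can be pushed onto the arguments. Once this reduction is done, each BiHom-quadri-algebra axiom on $A_{(\alpha,\beta)}$ becomes exactly the corresponding classical alternative quadri-algebra axiom on $A$ evaluated at the shifted arguments, which holds by hypothesis.

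For the morphism part, assuming $f$ is a quadri-algebra morphism commuting with the twisting maps, I would compute directly
\[
f(x\star_{(\alpha,\beta)} y) = f(\alpha(x)\star\beta(y)) = f\alpha(x)\star' f\beta(y) = \alpha'(f(x))\star'\beta'(f(y)) = f(x)\star'_{(\alpha',\beta')} f(y)
\]
for each $\star \in \{\nwarrow,\swarrow,\nearrow,\searrow\}$, and the intertwining $f\alpha=\alpha' f$, $f\beta=\beta' f$ is inherited. The only genuine labor is bookkeeping: ten axioms, each with two or three associators of different types. I do not expect any conceptual obstacle beyond organizing this case analysis carefully; the whole routine is formally identical to the pre-alternative case already proved in Theorem \ref{Yaudend}.
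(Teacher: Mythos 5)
Your overall strategy is exactly the paper's: the paper also verifies representative identities (\eqref{BiHomqua11} and \eqref{BiHomqua17}) by pushing $\alpha ,\beta $ onto the arguments via multiplicativity and commutativity, recognizes the result as a classical alternative quadri-algebra axiom evaluated at shifted arguments, and leaves the remaining axioms to the reader; the morphism part is the same one-line computation in both. However, your displayed ``key computation'' is half wrong. For the second family of axioms \eqref{BiHomqua14}--\eqref{BiHomqua18} the correct reduction is
\[
\{x,\beta (y),\alpha (z)\}^{\diamond}_{\alpha ,\beta }\big|_{A_{(\alpha ,\beta )}}
=\{\alpha ^{2}(x),\alpha \beta ^{2}(y),\alpha \beta ^{2}(z)\}^{\diamond}\big|_{A},
\]
not $\{\beta ^{2}(x),\alpha ^{2}\beta (y),\alpha ^{2}\beta (z)\}^{\diamond}$ as you assert. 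Indeed, in every associator of this family the first argument $x$ sits only in left slots of twice-applied twisted products, so it acquires exactly $\alpha ^{2}$ and is never touched by $\beta $; for instance
\[
(x\searrow _{(\alpha ,\beta )}\beta (y))\nwarrow _{(\alpha ,\beta )}\alpha \beta (z)
=(\alpha ^{2}(x)\searrow \alpha \beta ^{2}(y))\nwarrow \alpha \beta ^{2}(z),
\]
which agrees with the paper's own computation of $\{x,\beta (y),\alpha (z)\}^{m}_{\alpha ,\beta }$ landing at $\{\alpha ^{2}(x),\alpha \beta ^{2}(y),\alpha \beta ^{2}(z)\}^{m}$.

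Fortunately the slip does not damage the architecture of the argument. All that the reduction requires is that the two slots interchanged by each axiom carry the same composite map: $\alpha ^{2}\beta $ on the first two slots for \eqref{BiHomqua9}--\eqref{BiHomqua13} (your formula for this family is correct and matches the paper), and $\alpha \beta ^{2}$ on the last two slots for \eqref{BiHomqua14}--\eqref{BiHomqua18}. The corrected formula has this symmetry, so each twisted axiom is indeed the corresponding classical axiom at shifted arguments, as you intend. (One small bookkeeping remark: each of the ten axioms involves exactly two associators, not ``two or three''; sums of three associators only appear in the passage from quadri- to pre-alternative structures as in Lemma \ref{quadri to pre}, not here.) With the reduction formula corrected, your plan is essentially the paper's proof of Proposition \ref{quad}, modeled as you say on Theorem \ref{Yaudend}.
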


 \begin{proof}
We only prove (\eqref{BiHomqua11}) and (\eqref{BiHomqua17}) and leave the rest to the reader.
 We define the following operations $x\succ _{(\alpha , \beta )}y:=x\nearrow _{(\alpha , \beta )}y+
x\searrow _{(\alpha , \beta )}y$,
$x\prec _{(\alpha , \beta )}y:=x\nwarrow _{(\alpha , \beta )}y+ x\swarrow _{(\alpha , \beta )}y$,
$x\vee _{(\alpha , \beta )}y:=x\searrow _{(\alpha , \beta )}y+ x\swarrow _{(\alpha , \beta )}y$,
$x\wedge _{(\alpha , \beta )}y:=x\nearrow _{(\alpha , \beta )}y+ x\nwarrow _{(\alpha , \beta )}y$
and $x\ast _{(\alpha , \beta )}y:=x\searrow _{(\alpha , \beta )}y+ x\nearrow _{(\alpha , \beta )}y+
x\swarrow _{(\alpha , \beta )}y+ x\nwarrow _{(\alpha , \beta )}y$, for all $x, y\in A$.
It is easy to get $x\succ _{(\alpha , \beta )}
y= \alpha (x)\succ \beta (y), x\prec _{(\alpha , \beta )} y= \alpha
(x)\prec \beta (y), x\vee _{(\alpha , \beta )} y= \alpha (x)\vee
\beta (y), x\wedge _{(\alpha , \beta )} y= \alpha (x)\wedge \beta
(y)$ and $x\ast _{(\alpha , \beta )} y= \alpha (x)\ast \beta (y)$
for all $x, y \in A$. By using the fact that $\alpha$ and $\beta $
are two commuting quadri-alternative algebra morphisms, one can compute, for
all $x, y, z \in A$:
\begin{align*}
    & \{\b(x),\a(y),z\}_{\a,\b}^{ne}+ \{\b(y),\a(x),z\}_{\a,\b}^e\\
    &=(\b(x) \wedge_{(\a,\b)} \a(y)) \ne_{(\a,\b)} \b(z)-\a\b(x) \ne_{(\a,\b)}(\a(y) \gr_{(\a,\b)} z)\\
    &+ (\b(y) \vee_{(\a,\b)} \a(x)) \ne_{(\a,\b)} \b(z) -\a\b(y) \se_{(\a,\b)}(\a(x) \ne_{(\a,\b)} z) \\
       &=(\a^2\b(x) \wedge \a^2\b(y)) \ne \b^2(z)-\a^2\b(x) \ne(\a^2\b(y) \gr \b^2(z)) \\
      &+ (\a^2\b(y) \vee \a^2\b(x)) \ne \b^2(z) -\a^2\b(y) \se (\a^2\b(x) \ne \b^2(z))   \\
   &= \{\a^2\b(x),\a^2\b(y),\b^2(z)\}^{ne}+ \{\a^2\b(y),\a^2\b(x),\b^2(z)\}^e=0,
\end{align*}
and
\begin{align*}
    &  \{x,\b(y),\a(z)\}_{\a,\b}^m +  \{x,\b(y),\a(z)\}_{\a,\b}^l \\
    & =(x \se_{(\a,\b)} \b(y)) \nw_{(\a,\b)} \a\b(z)- \a(x) \se_{(\a,\b)} (\b(y) \nw_{(\a,\b)} \a(z)) \\
    &+ (x \ast_{(\a,\b)} \b(z)) \se_{(\a,\b)} \a\b(y) - \a(x) \se_{(\a,\b)} (\b(z) \se_{(\a,\b)} \a(y))\\
    &= (\a^2(x) \se \a\b^2(y)) \nw \a\b^2(z)- \a^2(x) \se (\a\b^2(y) \nw \a\b^2(z)) \\
    &+ (\a^2(x) \ast \a\b^2(z)) \se \a\b^2(y) - \a^2(x) \se (\a\b^2(z) \se \a\b^2(y)) \\
    &=  \{\a^2(x),\a\b^2(y),\a\b^2(z)\}^m +  \{\a^2(x),\a\b^2(z),\a\b^2(y)\}^l =0.
\end{align*}

 \end{proof}

\begin{rem}
 Let $(A, \nwarrow, \swarrow, \nearrow, \searrow, \alpha , \beta )$ be a  BiHom-alternative quadri-algebra and $\widetilde{\a}, \widetilde{\b}: A \to A$ be two  BiHom-alternative quadri-algebra morphisms such that any of the maps $\a, \b, \widetilde{\a}, \widetilde{\b}$ commute. Define new multiplications on $A$ by:
 \begin{align*}
    & x \nearrow' y= \widetilde{\a}(x) \nearrow \widetilde{\b}(y), \quad
    x \searrow' y= \widetilde{\a}(x) \searrow \widetilde{\b}(y), \\
   & x \nwarrow' y= \widetilde{\a}(x) \nwarrow \widetilde{\b}(y), \quad
   x \swarrow' y= \widetilde{\a}(x) \swarrow \widetilde{\b}(y) .
 \end{align*}
 Then, one can prove that $(A',\nearrow',\searrow',\swarrow',\nwarrow',\a \circ \widetilde{\a}, \b \circ \widetilde{\b})$ is a BiHom-alternative quadri-algebra.
\end{rem}

\begin{df}
Let $(A,\pt,\gr,\a,\b)$ be a BiHom-pre-alternative algebra and $(V,L_{\pt},R_{\pt},L_{\gr},R_{\gr},\f,\p)$  be a bimodule. A linear map $T: V \to A$ is called an $\mathcal{O}$-operator of  $(A,\pt,\gr,\a,\b)$ associated to  $(V,L_{\pt},R_{\pt},L_{\gr},R_{\gr},\f,\p)$  if $T$ satisfies:  $T \f=\a T$, $T \p=\b T$ and
\begin{align}\label{O-op prealt}
& T(u) \gr T(v)=T(L_{\gr}(T(u))v+R_{\gr}(T(v))u),\ \   T(u) \pt T(v)=T(L_{\pt}(T(u))v+R_{\pt}(T(v))u),
\end{align}
for all $u, v \in V$.
\end{df}

The have the following results.
\begin{prop}
Let   $(V,L_{\pt},R_{\pt},L_{\gr},R_{\gr},\f,\p)$ be a bimodule of a BiHom-pre-alternative algebra  $(A,\pt,\gr,\a,\b)$ and $(A,\circ, \a,\b)$ be the associated BiHom-alternative algebra.
If $T$ is  an $\mathcal{O}$-operator of  $(A,\pt,\gr,\a,\b)$ associated to  $(V,L_{\pt},R_{\pt},L_{\gr},R_{\gr},\f,\p)$,  then $T$ is an $\mathcal{O}$-operator of $(A,\circ,\a,\b)$ associated to $(V,L_{\pt}+L_{\gr}, R_{\pt}+R_{\gr},\f,\p)$.
\end{prop}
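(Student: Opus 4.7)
The proposal is to reduce the conclusion to adding the two defining identities of an $\mathcal{O}$-operator on the BiHom-pre-alternative structure. Concretely, by assumption one has, for all $u,v\in V$,
\begin{equation*}
T(u)\gr T(v)=T\bigl(L_{\gr}(T(u))v+R_{\gr}(T(v))u\bigr),\qquad T(u)\pt T(v)=T\bigl(L_{\pt}(T(u))v+R_{\pt}(T(v))u\bigr),
\end{equation*}
together with $T\f=\a T$ and $T\p=\b T$. Summing the two identities and using that $\circ=\pt+\gr$ is the product of the associated BiHom-alternative algebra (Proposition relating $(A,\pt,\gr,\a,\b)$ and $(A,\circ,\a,\b)$), the right-hand side collapses to $T\bigl((L_\pt+L_\gr)(T(u))v+(R_\pt+R_\gr)(T(v))u\bigr)$, which is exactly the $\mathcal{O}$-operator relation for $(A,\circ,\a,\b)$ with respect to $(V,L_{\pt}+L_{\gr},R_{\pt}+R_{\gr},\f,\p)$.

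Before invoking this, I will first need to verify that $(V,L_{\pt}+L_{\gr},R_{\pt}+R_{\gr},\f,\p)$ really is a bimodule of the BiHom-alternative algebra $(A,\circ,\a,\b)$. The commutation relations $\f(L_\pt(x)+L_\gr(x))=(L_\pt(\a(x))+L_\gr(\a(x)))\f$ and the analogous ones with $\p$, $R$, and $\b$ are immediate from the corresponding relations in the BiHom-pre-alternative bimodule axioms by linearity. The four nontrivial bimodule identities \eqref{rep1}--\eqref{rep4} for the associated pair follow by the same kind of manipulation used in the proof that $(A,\circ,\a,\b)$ is BiHom-alternative: expanding the $\circ$-associators $as_{\a,\b}$ for $L_\pt+L_\gr$ and $R_\pt+R_\gr$ decomposes into sums of the left, middle and right BiHom-associator-type identities of the bimodule definition for BiHom-pre-alternative algebras, which vanish by hypothesis.

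The main (and essentially only) obstacle is therefore the bookkeeping required to confirm that the bimodule identities \eqref{rep1}--\eqref{rep4} for $(V,L_\pt+L_\gr,R_\pt+R_\gr,\f,\p)$ over $(A,\circ,\a,\b)$ are algebraic consequences of the (longer) list of identities in the definition of a BiHom-pre-alternative bimodule. Once this is checked, the $\mathcal{O}$-operator relation itself requires no further computation: it is literally the sum of the two $\gr$- and $\pt$-relations satisfied by $T$. The conditions $T\f=\a T$ and $T\p=\b T$ are carried over unchanged, completing the proof.
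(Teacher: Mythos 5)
Your proposal is correct, and it is essentially the intended argument: the paper states this proposition without any proof, treating it as immediate, and the whole content is indeed the one-line summation you give --- adding the two identities in \eqref{O-op prealt} yields $T(u)\circ T(v)=T\bigl((L_\pt+L_\gr)(T(u))v+(R_\pt+R_\gr)(T(v))u\bigr)$, while $T\f=\a T$ and $T\p=\b T$ carry over verbatim. You are also right that, under the paper's definition of an $\mathcal{O}$-operator, one must first know that $(V,L_\pt+L_\gr,R_\pt+R_\gr,\f,\p)$ is a bimodule of $(A,\circ,\a,\b)$; the paper passes over this point in silence, so your flagging it is a genuine improvement in rigor. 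Your plan to verify it identity-by-identity does work: for instance, \eqref{rep1} follows by setting $y=x$ in the mixed identity $L_\prec(\b(x)\succ\a(y)+\b(y)\prec\a(x))\p=L_\succ(\a\b(x))L_\prec(\a(y))+L_\prec(\a\b(y))L_\circ(\a(x))$ and adding the result to $L_\succ(\b(x)\circ\a(x))\p=L_\succ(\a\b(x))L_\succ(\a(x))$, which gives $L_\circ(\b(x)\circ\a(x))\p=L_\circ(\a\b(x))L_\circ(\a(x))$, and the remaining three identities collapse similarly. However, you can bypass this bookkeeping entirely using two results already in the paper: by the semidirect-product characterization of bimodules of BiHom-pre-alternative algebras, $(A\oplus V,\ll,\gg,\a+\f,\b+\p)$ is BiHom-pre-alternative; its associated sum algebra is BiHom-alternative by the proposition on $\circ=\prec+\succ$; and that sum algebra is exactly the semidirect product of $(A,\circ,\a,\b)$ with $(V,L_\pt+L_\gr,R_\pt+R_\gr,\f,\p)$, so the semidirect-product characterization of BiHom-alternative bimodules yields the bimodule property with no further computation. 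With either route to that lemma, your proof is complete.
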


\begin{prop}\label{last prop}
Let $(A,\pt,\gr,\a,\b)$ be a BiHom-pre-alternative algebra and  $(V,L_{\pt},R_{\pt},L_{\gr},R_{\gr},\f,\p)$  be a bimodule. Let  $T$ be  an $\mathcal{O}$-operator of  $(A,\pt,\gr,\a,\b)$ associated to  $(V,L_{\pt},R_{\pt},L_{\gr},R_{\gr},\f,\p)$. Then there exists a BiHom-alternative quadri-algebra structure on $V$ given for any $u,v \in V$ by
\begin{align}
& u \se v=L_{\gr}(T(u))v,\ u \ne v=R_{\gr}(T(v))u,\ u \sw v= L_{\pt}(T(u))v,\ u \nw v=R_{\pt}(T(v))u.
\end{align}
Therefore there exists a BiHom-pre-alternative algebra structure on $V$ given by
\begin{align*}
    & u \pt v=  u \sw v+ \ u \nw v,\ \ u \gr v =u \se v+ u \ne v,
\end{align*}
and $T$ is a homomorphism of BiHom-pre-alternative algebras.

Furthermore, $T(V)=\{T(v),\ v \in V\} \subset A$ is a BiHom-pre-alternative subalgebra of $A$ and there exists an induced BiHom-alternative quadri-algebra structure on $T(V)$ given by
\begin{align}\label{cor}
    & T(u) \se T(v)= T(u \se v), \  T(u) \ne T(v)= T(u \ne v), \nonumber \\
    &  T(u) \sw T(v)= T(u \sw v),\  T(u) \nw T(v)= T(u \nw v) .
\end{align}
Moreover, its corresponding associated horizontal BiHom-pre-alternative algebra structure on $T(V )$ is just the BiHom-pre-alternative subalgebra structure of $(A,\pt,\gr,\a,\b)$ and $T$ a  BiHom-alternative quadri-algebra homomorphism.
\end{prop}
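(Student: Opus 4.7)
The plan is threefold: first verify that the four operations $\se,\ne,\sw,\nw$ on $V$ satisfy the ten axioms \eqref{BiHomqua9}--\eqref{BiHomqua18} of a BiHom-alternative quadri-algebra; then invoke Lemma \ref{quadri to pre} for the induced horizontal BiHom-pre-alternative structure on $V$; finally transport everything to $T(V)\subset A$ using the $\mathcal{O}$-operator equations \eqref{O-op prealt}. Multiplicativity of $\f$ and $\p$ with respect to each of the four operations is the easy starting point: unfolding $u\se v=L_\gr(T(u))v$ and combining the bimodule compatibility $\f L_\gr(x)=L_\gr(\a(x))\f$ with the intertwining $T\f=\a T$ yields $\f(u\se v)=\f(u)\se\f(v)$, and the three remaining operations are handled identically, while $\f\p=\p\f$ is assumed as part of the bimodule data.

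The central step is the verification of the ten associator identities. The translation dictionary is provided by the three consequences of \eqref{O-op prealt}:
\begin{equation*}
T(u\pt v)=T(u)\pt T(v),\qquad T(u\gr v)=T(u)\gr T(v),\qquad T(u\ast v)=T(u)\circ T(v).
\end{equation*}
For instance, the axiom $\{u,\p(v),\f(v)\}^{r}_{\f,\p}=0$ (the char-$\neq 2$ reformulation of \eqref{BiHomqua14}), expanded via $u\nw v=R_\pt(T(v))u$, becomes
\begin{equation*}
R_\pt(\a\b T(v))R_\pt(\b T(v))u - R_\pt(\b T(v)\circ\a T(v))\f(u) = 0,
\end{equation*}
which is exactly the bimodule relation $R_\pt(\a\b(x))R_\pt(\b(x))=R_\pt(\b(x)\circ\a(x))\f$ evaluated at $x=T(v)$. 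I would process each of \eqref{BiHomqua9}--\eqref{BiHomqua18} by the same recipe: expand the quadri-associator using the four bimodule actions, apply the dictionary to absorb compound $T$-images, and recognize the result as one of the bimodule axioms stated in the definition of a BiHom-pre-alternative bimodule. The main obstacle is purely bookkeeping: matching each of the ten axioms to its correct bimodule counterpart among the many axioms of the definition; no new algebra is required.

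With $V$ established as a BiHom-alternative quadri-algebra, Lemma \ref{quadri to pre} supplies the horizontal BiHom-pre-alternative structure on $V$ with $u\pt v=u\sw v+u\nw v$ and $u\gr v=u\se v+u\ne v$, and the $\mathcal{O}$-operator equations give $T(u\pt v)=T(u)\pt T(v)$ and $T(u\gr v)=T(u)\gr T(v)$, so $T$ is a BiHom-pre-alternative algebra morphism. For $T(V)\subset A$, closure under $\pt$ and $\gr$ is exactly \eqref{O-op prealt}, so $T(V)$ is a BiHom-pre-alternative subalgebra of $A$. The four induced operations on $T(V)$ are defined by \eqref{cor}, with well-definedness on the image ensured by the $\mathcal{O}$-operator identities, and the quadri-algebra axioms on $T(V)$ then follow by pushing forward along the surjection $V\twoheadrightarrow T(V)$. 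The identification of the induced horizontal BiHom-pre-alternative structure on $T(V)$ with the subalgebra structure inherited from $(A,\pt,\gr,\a,\b)$ is a direct restatement of $T(u\pt v)=T(u)\pt T(v)$ and $T(u\gr v)=T(u)\gr T(v)$, which also makes $T$ a BiHom-alternative quadri-algebra morphism tautologically from \eqref{cor}.
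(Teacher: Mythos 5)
Your proposal is correct and takes essentially the same route as the paper's own proof: expand each quadri-associator on $V$ through the four bimodule actions, use $T\f=\a T$, $T\p=\b T$ and the $\mathcal{O}$-operator identities as a dictionary (e.g.\ $T(u\ast v)=T(u)\circ T(v)$), and recognize the result as a bimodule axiom --- your sample reduction of \eqref{BiHomqua14} to $R_\pt(\a\b(x))R_\pt(\b(x))=R_\pt(\b(x)\circ \a(x))\f$ at $x=T(v)$ is exactly the paper's recipe applied to a different axiom (the paper works out \eqref{BiHomqua13} and \eqref{BiHomqua16} instead). Like the paper, you verify a representative sample and leave the remaining axioms and the transport to $T(V)$ as analogous routine checks, so the two proofs match in both method and level of detail.
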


\begin{proof}
Set $L=L_{\pt}+L_{\gr}$ and $R=R_{\pt}+R_{\gr}$.  For any $u,v,w \in V$, we have
\begin{align*}
    &(\p(u) \ast \f(u))\se \p(v)-\f\p(u) \se (\f(u) \se v) \\
    &=(L(T(\p(u)))\f(u)+R(T(\f(u)))\p(u))\se \p(v)-\f\p(u)\se (L_{\gr}(T(\f(u)))v) \\
    &=L_{\gr}(T(L(T(\p(u)))\f(u)+R(T(\f(u)))\p(u)))\p(v)-L_{\gr}(T(\f\p(u)))L_{\gr}(T(\f(u)))v \\
    & =L_{\gr}(T(\p(u)) \circ T(\f(u))) \p(v)-L_{\gr}(T(\f\p(u)))L_{\gr}(T(\f(u)))v =0.
\end{align*}
Furthermore,
\begin{align*}
 & (u \sw \p(v)) \nw \f\p(w)-\f(u) \sw (\p(v) \nw \f(w)+ \p(v) \ne \f(w)) \\
& =R_{\pt}(T(\f\p(w)))L_{\pt}(T(u))\p(v)-L_{\pt}(T(\f(u)))(R_{\pt}(T(\f(w)))\p(v)+R_{\gr}(T(\f(w)))\p(v))\\
  &  =R_{\pt}(T(\f\p(w)))L_{\pt}(T(u))\p(v)-L_{\pt}(T(\f(u)))R(T(\f(w)))\p(v),
\end{align*}
and
\begin{align*}
& (u \nw \p(w)+u \sw \p(w) ) \sw \f\p(v)-\f(u) \sw (\p(w) \se \f(v)+ \p(w) \sw \f(v))\\
&= L_{\pt}(T(R_{\pt}(\p(w))u+L_{\pt}(u)\p(w)))\f\p(v)-
L_{\pt}(T(\f(u)))(L_{\gr}(T(\p(w)))\f(v)+L_{\pt}(T(\p(w)))\f(v)) \\
&=L_{\pt}(T(u) \pt T(\p(w)))\f\p(v)-L_{\pt}(T(\f(u)))L(T(\p(w)))\f(v).
\end{align*}
This means that
\begin{align*}
 & \{u,\p(v),\f(w)\}^w_{\f,\p}+     \{u,\p(w),\f(v)\}^{sw}_{\f,\p} =\\
 &R_{\pt}(T(\f\p(w)))L_{\pt}(T(u))\p(v)-L_{\pt}(T(\f(u)))R(T(\f(w)))\p(v)\\
 & +L_{\pt}(T(u) \pt T(\p(w)))\f\p(v)-L_{\pt}(T(\f(u)))L(T(\p(w)))\f(v) =0,
\end{align*}
since $(V,L_{\pt},R_{\pt},L_{\gr},R_{\gr},\f,\p)$ is a bimodule of $(A,\pt,\gr,\a,\b)$.
The rest of identities can be proved using  analogous  computations, so they will be left to the reader.

\end{proof}

\begin{cor}
Let $(A,\pt,\gr,\a,\b)$ be a BiHom-pre-alternative algebra. Then there
exists a compatible BiHom-alternative quadri-algebra structure on $(A,\pt,\gr,\a,\b)$
such that $(A,\pt,\gr,\a,\b)$ is the associated horizontal
BiHom-pre-alternative algebra if and only if there exists an invertible
$\mathcal{O}$-operator of $(A,\pt,\gr,\a,\b)$.
\end{cor}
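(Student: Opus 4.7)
The plan is to prove the two implications separately, using Proposition \ref{last prop} for the reverse direction and an explicit construction with the identity map for the forward direction.

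For the implication $(\Leftarrow)$, I would simply invoke Proposition \ref{last prop}. Given an invertible $\mathcal{O}$-operator $T:V\to A$ of $(A,\pt,\gr,\a,\b)$ associated to a bimodule $(V,L_{\pt},R_{\pt},L_{\gr},R_{\gr},\f,\p)$, the surjectivity of $T$ gives $T(V)=A$, and the formulas \eqref{cor} transport the BiHom-alternative quadri-algebra structure from $V$ to $A$. The key remaining point is to observe that the induced horizontal BiHom-pre-alternative structure on $T(V)=A$ coincides with the original $(\pt,\gr)$. This is immediate from the $\mathcal{O}$-operator identities: for all $u,v\in V$,
\begin{align*}
T(u)\pt T(v) &= T(L_{\pt}(T(u))v+R_{\pt}(T(v))u) = T(u\sw v)+T(u\nw v),\\
T(u)\gr T(v) &= T(L_{\gr}(T(u))v+R_{\gr}(T(v))u) = T(u\se v)+T(u\ne v),
\end{align*}
so after applying \eqref{cor} we get $T(u)\pt T(v)=T(u)\sw T(v)+T(u)\nw T(v)$ and similarly for $\gr$, as required for compatibility.

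For $(\Rightarrow)$, suppose $(A,\nw,\sw,\ne,\se,\a,\b)$ is a compatible BiHom-alternative quadri-algebra structure whose associated horizontal BiHom-pre-alternative algebra is $(A,\pt,\gr,\a,\b)$. The natural candidate is $T=\mathrm{id}_A$ viewed as a map $V:=A\to A$. I would define the bimodule structure on $V=A$ by
\[
L_{\pt}(x)y:=x\sw y,\quad R_{\pt}(x)y:=y\nw x,\quad L_{\gr}(x)y:=x\se y,\quad R_{\gr}(x)y:=y\ne x,\quad \f:=\a,\quad \p:=\b,
\]
so that the $\mathcal{O}$-operator relations \eqref{O-op prealt} for $T=\mathrm{id}$ become exactly the decompositions $x\pt y=x\sw y+x\nw y$ and $x\gr y=x\se y+x\ne y$, which hold by the definition of the horizontal structure. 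The intertwining relations $T\f=\a T$, $T\p=\b T$ are trivial.

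The main obstacle is verifying that the tuple just defined really is a bimodule of $(A,\pt,\gr,\a,\b)$ in the sense of the long list of identities in the Definition preceding Proposition \ref{O-operator} for pre-alternative bimodules. Each of those axioms is a linear combination of quadri-associators $\{x,y,z\}^\bullet_{\a,\b}$ applied to appropriate twistings, and thus follows from the BiHom-alternative quadri-algebra axioms \eqref{BiHomqua9}--\eqref{BiHomqua18} exactly as in the proof of Lemma \ref{quadri to pre}: one expands each left- and right-multiplication operator in terms of the four operations $\nw,\sw,\ne,\se$, regroups the resulting sixteen terms into the ten associators $\{\cdot,\cdot,\cdot\}^\bullet_{\a,\b}$, and reads off vanishing from the quadri-algebra relations. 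This is routine but tedious, and I would only carry out two representative cases in detail (one from the ``left'' family using \eqref{BiHomqua13}, one from the ``middle'' family using, e.g., \eqref{BiHomqua9} combined with \eqref{BiHomqua11}) and indicate that the remaining cases are entirely analogous, mirroring the pattern already exhibited in the proof of Lemma \ref{quadri to pre}. Invertibility of $T=\mathrm{id}$ is clear, completing the proof.
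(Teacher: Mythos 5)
Your proposal matches the paper's proof in both directions: the paper likewise derives the $(\Leftarrow)$ direction from Proposition \ref{last prop}, transporting the quadri-algebra structure via \eqref{cor} and checking compatibility with the original $(\pt,\gr)$ exactly as you do, and for $(\Rightarrow)$ it takes $T=\mathrm{id}$ with the bimodule $(A,L_{\searrow},R_{\nearrow},L_{\swarrow},R_{\nwarrow},\a,\b)$, which is precisely your choice of $L_{\gr},R_{\gr},L_{\pt},R_{\pt}$ with $\f=\a$, $\p=\b$. If anything, you are slightly more careful than the paper, which asserts the bimodule property without verification, whereas you correctly flag it as the step requiring the quadri-algebra axioms \eqref{BiHomqua9}--\eqref{BiHomqua18}.
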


 \begin{proof}
If there exists an invertible $\mathcal{O}$-operator
$T$ of $(A,\pt,\gr,\a,\b)$ associated to a bimodule

$(V,L_{\pt},R_{\pt},L_{\gr},R_{\gr},\f,\p)$, then by Proposition \ref{last prop}, there
exists a BiHom-alternative quadri-algebra structure on $V$.
Therefore we can define a BiHom-alternative quadri-algebra structure on $A$ by
equation \eqref{cor} such that $T$ is a BiHom-alternative quadri-algebra isomorphism,
that is,
$$x\searrow y=T(L_\succ(x)T^{-1}(y)),\;
x\nearrow y=T(R_\succ(y)T^{-1}(x)),\;$$$$ x\swarrow
y=T(L_\prec(x)T^{-1}(y)),\; x\nwarrow
y=T(R_\prec(y)T^{-1}(x)),\;\;\forall x,y\in A.$$ Moreover it is a
compatible BiHom-alternative quadri-algebra structure on $(A,\pt,\gr,\a,\b)$ since for
any $x,y\in A$, we have
$$x\succ y=T(T^{-1}(x)\succ T^{-1}(y))=T(R_\succ(y)T^{-1}(x)+R_\succ(x)T^{-1}(y))
=x\nearrow y+x\searrow y,$$
$$x\prec y=T(T^{-1}(x)\prec T^{-1}(y))=T(R_\prec(y)T^{-1}(x)+L_\prec(x)T^{-1}(y))
=x\nwarrow y+x\swarrow y.$$

Conversely, let $(A, \searrow, \nearrow, \nwarrow, \swarrow)$ be a
BiHom-alternative quadri-algebra and $(A,\pt,\gr,\a,\b)$ be the associated horizontal
BiHom-pre-alternative algebra. Then $(A,L_\searrow, R_\nearrow, L_\swarrow,
R_\nwarrow,\a,\b)$ is a bimodule of $(A,\pt,\gr,\a,\b)$ and the
identity map $id$ is an invertible $\mathcal{O}$-operator of $(A,\pt,\gr,\a,\b)$
associated to it.
 \end{proof}

Now, we introduce the following concept of Rota-Baxter operator on a BiHom-alternative quadri-algebra which is a particular case of $\mathcal{O}$-operator.
\begin{df}
Let $(A, \prec , \succ , \alpha , \beta )$ be a BiHom-pre-alternative
algebra. A Rota-Baxter operator of weight $0$ on $A$ is a linear map $R:
A\rightarrow A$ such that $R\alpha =\alpha  R$,
$R \beta =\beta  R$ and the following conditions are satisfied, for all $x, y\in A$:
\begin{eqnarray}
&& R(x)\succ R(y)=R(x\succ R(y)+ R(x)\succ y),  \label{baxter1}\\
&& R(x)\prec R(y)=R(x\prec R(y)+ R(x)\prec y).  \label{baxter2}
\end{eqnarray}
\end{df}
We know that $(A, \ast, \alpha , \beta )$ is a
BiHom-alternative algebra. Adding equations \eqref{baxter1} and
\eqref{baxter2}, one  obtains that $R$ is also a Rota-Baxter operator of
weight 0 for $(A, \ast)$:
\begin{eqnarray*}
&& R(x)\ast R(y)=R(x\ast R(y)+ R(x)\ast y).  \label{baxter3}
\end{eqnarray*}

Analogously to what happens for BiHom-quadri-algebras \cite{LiuMakhloufMeninPanaite}, Rota-Baxter operators allow different constructions for BiHom-alternative quadri-algebras.
\begin{cor} \label{operation}
Let $(A, \prec , \succ , \alpha , \beta )$ be a BiHom-pre-alternative
algebra and $R: A\rightarrow A$ be a Rota-Baxter operator of weight 0 for $A$.
Define new operations on $A$ by
\begin{eqnarray*}
x\searrow_R y=R(x)\succ y, ~ x\nearrow_R y=x\succ R(y), ~
x\swarrow_R y=R(x)\prec y ~and ~ x\nwarrow_R y=x\prec R(y).
\end{eqnarray*}
Then $(A, \nwarrow_R, \swarrow_R, \nearrow_R, \searrow_R, \alpha ,
\beta )$ is a BiHom-alternative quadri-algebra.
\end{cor}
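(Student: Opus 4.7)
The plan is to deduce this corollary directly from Proposition~\ref{last prop} by realizing $R$ as an $\mathcal{O}$-operator on the adjoint (regular) bimodule of the BiHom-pre-alternative algebra.

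First, I would exhibit the adjoint representation as a bimodule of $(A,\prec,\succ,\alpha,\beta)$: take $V=A$ with structure maps $\phi=\alpha$, $\psi=\beta$ and actions $L_\pt(x)=l_\prec(x)$, $R_\pt(x)=r_\prec(x)$, $L_\gr(x)=l_\succ(x)$, $R_\gr(x)=r_\succ(x)$. Checking that this fits the definition of a bimodule of a BiHom-pre-alternative algebra is a routine axiom-by-axiom translation: each of the many bimodule identities, once specialized to $V=A$ with the actions being multiplications, reduces to one of the defining identities \eqref{right cond}--\eqref{m,l cond} (together with the multiplicativity of $\alpha$ and $\beta$ with respect to $\prec$ and $\succ$). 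This is the BiHom-pre-alternative analogue of the bimodule statement used in the BiHom-alternative setting earlier in the paper.

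Second, I would rewrite the hypotheses on $R$ as the $\mathcal{O}$-operator conditions associated with this adjoint bimodule. The commutations $R\alpha=\alpha R$ and $R\beta=\beta R$ are the required equivariance, while the Rota--Baxter relations \eqref{baxter1} and \eqref{baxter2} become
\begin{align*}
R(u)\succ R(v)&=R\bigl(L_\gr(R(u))v+R_\gr(R(v))u\bigr),\\
R(u)\prec R(v)&=R\bigl(L_\pt(R(u))v+R_\pt(R(v))u\bigr),
\end{align*}
which is exactly \eqref{O-op prealt} with $T=R$.

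Finally, Proposition~\ref{last prop} applied to $T=R$ endows $V=A$ with a BiHom-alternative quadri-algebra structure whose four operations read
\begin{align*}
u\searrow v&=L_\gr(R(u))v=R(u)\succ v=u\searrow_R v,\\
u\nearrow v&=R_\gr(R(v))u=u\succ R(v)=u\nearrow_R v,\\
u\swarrow v&=L_\pt(R(u))v=R(u)\prec v=u\swarrow_R v,\\
u\nwarrow v&=R_\pt(R(v))u=u\prec R(v)=u\nwarrow_R v,
\end{align*}
which coincide with the operations in the statement, and this immediately gives the conclusion. The only nontrivial step is the first one (the bookkeeping check that the regular representation of a BiHom-pre-alternative algebra is a bimodule); the BiHom-alternative quadri-algebra axioms themselves never need to be verified directly, since Proposition~\ref{last prop} takes care of them.
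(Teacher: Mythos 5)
Your proposal is correct and takes essentially the paper's intended route: the paper states this result as a corollary of Proposition \ref{last prop} without written proof, relying (as in the Remark preceding Corollary \ref{BHRBzero} for the BiHom-alternative case) on the identification of a weight-zero Rota--Baxter operator with an $\mathcal{O}$-operator on the regular bimodule $(A,l_{\prec},r_{\prec},l_{\succ},r_{\succ},\alpha,\beta)$, after which the four operations of Proposition \ref{last prop} specialize exactly to $\searrow_R,\nearrow_R,\swarrow_R,\nwarrow_R$. Your explicit check that the adjoint representation is a bimodule --- each bimodule axiom reducing, with $\phi=\alpha$, $\psi=\beta$, to one of the defining identities \eqref{right cond}--\eqref{m,l cond} or a polarization thereof (valid in characteristic $0$) --- is precisely the bookkeeping the paper leaves implicit, so the argument is complete.
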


\begin{lem}\label{pairRB}
Let $(A,\ast,\a,\b)$ be a BiHom-alternative algebra and $R, P$ two commuting Rota-Baxter operators on $A$ such that $R \alpha =\alpha  R$, $R \beta =\beta  R$, $P \alpha =\alpha  P$ and $P \beta =\beta  P$. Then $P$ is a Rota-Baxter operator on the BiHom-pre-alternative algebra $(A,\prec_{R},\succ_{R},\a,\b)$. where
$x \pt_R y=xR(y)$ and $x \gr_R y =R(x)y$.
\end{lem}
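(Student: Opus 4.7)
The plan is a direct verification: by Corollary \ref{BHRBzero}, the quintuple $(A,\prec_R,\succ_R,\a,\b)$ is already known to be a BiHom-pre-alternative algebra, so it suffices to check the two Rota-Baxter identities \eqref{baxter1} and \eqref{baxter2} for $P$ with respect to the operations $\prec_R, \succ_R$, together with the compatibilities $P\a=\a P$ and $P\b=\b P$ which are part of the hypothesis.

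First I would verify \eqref{baxter1}. Starting from the left-hand side and using the definition of $\succ_R$, the commutation $RP=PR$, and the Rota-Baxter identity for $P$ on the BiHom-alternative algebra $(A,\ast,\a,\b)$, one computes
\begin{align*}
P(x)\succ_R P(y)
&= R(P(x))\ast P(y) = P(R(x))\ast P(y) \\
&= P\bigl(P(R(x))\ast y + R(x)\ast P(y)\bigr) \\
&= P\bigl(R(P(x))\ast y + R(x)\ast P(y)\bigr) \\
&= P\bigl(P(x)\succ_R y + x\succ_R P(y)\bigr),
\end{align*}
which is exactly \eqref{baxter1} for $(A,\prec_R,\succ_R,\a,\b)$. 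Next I would perform the analogous computation for \eqref{baxter2}, noting that in this case $R$ appears on the right argument, so that
\begin{align*}
P(x)\prec_R P(y)
&= P(x)\ast R(P(y)) = P(x)\ast P(R(y)) \\
&= P\bigl(P(x)\ast R(y) + x\ast P(R(y))\bigr) \\
&= P\bigl(P(x)\prec_R y + x\prec_R P(y)\bigr).
\end{align*}

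There is no real obstacle here: the proof is a two-line calculation in each case, and the only non-trivial ingredient is the commutation $RP=PR$, which is precisely what allows one to move $P$ past $R$ inside $R(P(x))$ or $R(P(y))$ so that the Rota-Baxter identity of $P$ on $(A,\ast,\a,\b)$ can be applied directly. The multiplicativity of $P$ with respect to $\a$ and $\b$ carries over from the hypothesis unchanged, since $\prec_R$ and $\succ_R$ involve only $\ast$ and $R$, both of which commute with $\a$ and $\b$.
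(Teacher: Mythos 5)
Your proposal is correct and follows essentially the same route as the paper: both verify \eqref{baxter1} and \eqref{baxter2} by unfolding the definitions of $\prec_R,\succ_R$, using the commutation $RP=PR$ to rewrite $R(P(x))$ as $P(R(x))$ (and similarly on the right argument), and then applying the weight-$0$ Rota--Baxter identity of $P$ on $(A,\ast,\a,\b)$. Your additional appeal to Corollary \ref{BHRBzero} for the BiHom-pre-alternative structure of $(A,\prec_R,\succ_R,\a,\b)$ is implicit in the paper and is a reasonable thing to make explicit.
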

\begin{proof}
For all
$x, y\in A$, we have:
\begin{eqnarray*}
P(x)\succ_R P(y)&=& R(P(x))P(y)=P(R(x))P(y)\\
&=&P(R(x)P(y)+P(R(x))y)=P(R(x)P(y)+R(P(x))y)\\
&=& P(x\succ_R P(y)+ P(x)\succ_R y),
\end{eqnarray*}
and
\begin{eqnarray*}
P(x)\prec_R P(y)&=& P(x)R(P(y))=P(x)P(R(y))\\
&=&P(x P(R(y))+P(x)R(y))=P(x R(P(y))+P(x)R(y))\\
&=& P(x\prec_R P(y)+ P(x)\prec_R y).
\end{eqnarray*}
\end{proof}

\begin{cor}
In the setting of Lemma \ref{pairRB}, there exists
a BiHom-alternative quadri-algebra structure on the underlying vector
space $(A,\ast, \alpha , \beta)$, with operations defined by
\begin{eqnarray*}
&& x\searrow y=P(x)\succ_R y=P(R(x))y=R(P(x))y,\\
&& x\nearrow y=x\succ_R P(y)=R(x)P(y),\\
&& x\swarrow y=P(x)\prec_R y=P(x)R(y),\\
&& x\nwarrow y=x\prec_R P(y)=xR(P(y))=xP(R(y)).
\end{eqnarray*}
\end{cor}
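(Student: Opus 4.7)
The plan is to obtain the BiHom-alternative quadri-algebra as a two step construction, using the preceding results as black boxes. First, I would apply Corollary~\ref{BHRBzero} to the BiHom-alternative algebra $(A,\ast,\alpha,\beta)$ and the Rota-Baxter operator $R$ (which commutes with $\alpha$ and $\beta$) to produce a BiHom-pre-alternative algebra $(A,\prec_R,\succ_R,\alpha,\beta)$ whose partial products are $x\succ_R y=R(x)y$ and $x\prec_R y=xR(y)$.

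Next I would invoke Lemma~\ref{pairRB}, whose conclusion is precisely that the second Rota-Baxter operator $P$ (which commutes with $R$, $\alpha$ and $\beta$) remains a Rota-Baxter operator of weight $0$ on the BiHom-pre-alternative algebra just obtained. At this stage the hypotheses of Corollary~\ref{operation} are satisfied for the pair $\bigl((A,\prec_R,\succ_R,\alpha,\beta),P\bigr)$, so that corollary yields a BiHom-alternative quadri-algebra structure $(A,\nwarrow_P,\swarrow_P,\nearrow_P,\searrow_P,\alpha,\beta)$ with
\begin{align*}
x\searrow_P y &= P(x)\succ_R y, & x\nearrow_P y &= x\succ_R P(y),\\
x\swarrow_P y &= P(x)\prec_R y, & x\nwarrow_P y &= x\prec_R P(y).
\end{align*}

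The only remaining task is to rewrite each of these four operations in terms of $R$, $P$ and the original product $\ast$ of $A$, which is a direct substitution: $x\searrow_P y=R(P(x))y=P(R(x))y$ by the commutation of $R$ and $P$, and similarly $x\nearrow_P y=R(x)P(y)$, $x\swarrow_P y=P(x)R(y)$, and $x\nwarrow_P y=xR(P(y))=xP(R(y))$. These match exactly the four operations in the statement, so the conclusion follows.

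I do not anticipate any genuine obstacle here, since the statement is essentially a composition of Corollary~\ref{BHRBzero}, Lemma~\ref{pairRB}, and Corollary~\ref{operation}; the only point that has to be watched is the systematic use of the commutation relations $RP=PR$, $R\alpha=\alpha R$, $R\beta=\beta R$, $P\alpha=\alpha P$, $P\beta=\beta P$, which are needed both to identify the two equivalent forms of $\searrow$ and $\nwarrow$ given in the statement and to guarantee that the intermediate structures produced at each step are indeed BiHom (i.e., that the structure maps $\alpha,\beta$ intertwine all the new operations). No fresh axiom-by-axiom verification of the ten BiHom-alternative quadri-algebra identities is required, as all of them are absorbed into Corollary~\ref{operation}.
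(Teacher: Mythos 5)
Your proposal is correct and takes essentially the same route as the paper: the paper's proof likewise consists of applying Corollary \ref{operation} with the Rota-Baxter operator $P$ to the BiHom-pre-alternative algebra $(A,\prec_R,\succ_R,\alpha,\beta)$, with Lemma \ref{pairRB} (and implicitly Corollary \ref{BHRBzero}) supplying the hypotheses. The only difference is that you make explicit the commutation bookkeeping ($RP=PR$, $R\alpha=\alpha R$, etc.) used to rewrite the four operations, which the paper states in the corollary itself and leaves implicit in its one-line proof.
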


\begin{proof}
We use the construction in Proposition \ref{operation} with the Rota-Baxter operator $P$  of weight $0$  and the BiHom-pre-alternative  algebra $(A, \prec_R, \succ_R,
\alpha , \beta)$.
\end{proof}



\end{document}